\documentclass[a4paper]{amsart}
\usepackage[margin=1in]{geometry}
\usepackage{booktabs}
\usepackage{makecell}
\usepackage{thmtools}
\usepackage{mathtools}
\usepackage{hyperref}
\usepackage[export]{adjustbox}
\usepackage[capitalize]{cleveref}
\usepackage{tikz-cd}
\usepackage{enumitem}
\usepackage[charter]{mathdesign}
\usepackage{todonotes}
\presetkeys{todonotes}{inline}{}
\usepackage{verbatim}

\definecolor{darkred}{rgb}{0.7,0,0} \definecolor{darkblue}{rgb}{0,0,0.7} \newcommand{\darkblue}{\color{darkblue}} \declaretheorem[parent=section]{theorem}
\declaretheorem[sibling=theorem, style=definition]{definition}
\declaretheorem[sibling=theorem]{lemma}
\declaretheorem[sibling=theorem]{proposition}
\declaretheorem[sibling=theorem]{corollary}
\declaretheorem[sibling=theorem, style=remark]{remark}

\DeclareMathOperator{\depth}{depth}
\DeclareMathOperator{\pal}{Pal}
\newcommand{\defn}[1]{\textsl{\darkblue #1}} 

\usetikzlibrary{automata}

\title{On the language of palindromic reduced words in a Coxeter group}
\author{Asilata Bapat}
\begin{document}
\begin{abstract}
  We prove that the language of palindromic reduced words in a Coxeter system is regular if and only if the Coxeter group is a direct product of a finite Coxeter group with finitely many copies of the infinite dihedral group.
\end{abstract}
\maketitle

\section{Introduction}
Let \((W,S)\) be a Coxeter system.
Then \(W\) is a group generated by the finite set \(S\).
Each element of \(S\) has order two, and the only relations between them are of dihedral type (see~\cref{sec:coxeter-basics}).
A \emph{reflection} is any element of \(W\) that is conjugate to some element of \(S\).
Since all generators have order two, any element of \(W\) is a product of generators.
Moreover, the inverse of any such product is simply its reverse.
Thus if \(t = w s w^{-1}\) is a reflection for some \(s \in S\) and some \(w \in W\), then \(t\) is a palindromic word in the generators \(S\).

In fact, any reflection has a palindromic \emph{reduced} word (see, e.g.~\cite[Lemma 1.4]{dye:87} as well as~\cite[Exercise 1.10]{bjo.bre:05}).
That is, a palindromic word with minimal possible length among all words in the generating set \(S\).
Conversely, any word that has a palindromic reduced word as \(w s w^{-1}\) is clearly a reflection.

The aim of this article is to answer a question of Biagioli--Hohlweg--Sasso~\cite{bia.hoh.sas:26}, which asks whether the language of palindromic reduced words in the generators \(S\) is regular.
Let \(\pal\) be the language of all palindromic reduced words in \((W,S)\).
We regard \(\pal\) as a formal language in \(S\).
That is, \(\pal\) is a subset of \(S^{\ast}\), which is the free monoid on the elements of \(S\).
Recall that a language is called~\emph{regular} if there is a finite automaton that recognises it.
If \(W\) is a finite group, then \(\pal\) is finite and hence regular.
We prove the following theorem that answers the question fully for any Coxeter system.

\begin{theorem}[Main theorem]\label{thm:all-coxeter-groups-main}
  Let \((W,S)\) be a Coxeter system.
  Then the language \(\pal\) of palindromic reduced words in \((W,S)\) is regular if and only if \(W\) is the direct product of a finite Coxeter system with finitely many copies of the infinite dihedral group.
\end{theorem}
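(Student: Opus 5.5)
The plan has two directions. First I would reduce to irreducible components. Write \(W = W_1 \times \cdots \times W_k\) as a product of irreducible Coxeter systems, with \(S\) the disjoint union of the \(S_i\), and generators in different factors commuting.

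A reduced word is a palindrome in \(S\). Its length is odd, so its middle letter lies in a single factor, say \(S_j\). I claim the reduced palindromic words are exactly the shuffles \(w\) in which:
\begin{itemize}
\item the restriction of \(w\) to \(S_j\) is a reduced palindrome of \(W_j\);
\item for \(i \neq j\), the restriction to \(S_i\) is of the form \(u u^{-1}\), with \(u\) reduced, or more precisely reduced of even length;
\item the shuffle itself is globally symmetric.
\end{itemize}
But \(u u^{-1}\) is trivial in \(W_i\), so it is not reduced unless \(u\) is empty. Hence \(\pal\) is the union over \(j\) of the reduced palindromes using only letters of \(S_j\). In other words, \(\pal(W) = \bigcup_j \pal(W_j)\), since a word in \(S_j\) is reduced in \(W\) if and only if it is reduced in \(W_j\).

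A finite union of regular languages is regular. Conversely, each \(\pal(W_j) = \pal(W) \cap S_j^\ast\) is regular whenever \(\pal(W)\) is. So \(\pal(W)\) is regular if and only if each \(\pal(W_j)\) is. The theorem therefore reduces to the following: for irreducible \((W,S)\), \(\pal\) is regular if and only if \(W\) is finite or \(W\) is the infinite dihedral group.

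For the "if" direction, the finite case is clear. For \(\tilde A_1\) with generators \(s,t\), the reduced words are exactly the alternating words. The alternating words that are palindromes are the odd-length ones, \(s(ts)^n\) and \(t(st)^n\), and this language is regular.

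For the "only if" direction, let \(W\) be infinite irreducible and not \(\tilde A_1\); then \(|S|\ge 3\). I would use the pumping lemma, or Myhill--Nerode, to show non-regularity. The idea is to find infinitely many reduced words \(u_n\) with pairwise distinct "palindromic completions". The naive family uses \(u_n^{-1}\): for each \(n\), \(u_n s u_n^{-1}\) should be a reduced palindrome, while \(u_m s u_n^{-1}\) for \(m\neq n\) is not a palindrome. The difficulty is that reducedness is not automatic: \(u s u^{-1}\) is reduced precisely when \(\ell(usu^{-1}) = 2\ell(u)+1\).

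A cleaner tool is the following standard fact (Dyer): if \(\ell(us) > \ell(u)\), then \(u s u^{-1}\) is reduced as the word \(u\,s\,u^{-1}\) if and only if \(\ell(u s u^{-1}) = 2\ell(u)+1\). The main obstacle is therefore to construct, in any infinite irreducible non-\(\tilde A_1\) Coxeter group, an infinite family of such "straight" conjugations with unbounded \(u\). A single infinite family is not enough, since regular languages can contain \(\{x^n s x^n\}\)-type sets; I need two words \(x,y\) with \(x^n\) and \(y^m\) interacting so that the words \(x^n y^m\) all give reduced palindromes.

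My concrete plan is to pick \(s\in S\) and a rank-3 irreducible parabolic subgroup containing \(s\). This subgroup is infinite or finite, and I would handle it case by case if necessary. Inside it I would aim to find reduced words \(a,b\) such that \(a^n s a^{-n}\) is reduced for all \(n\), while \(a^n\, v\, a^{-m}\) is never a reduced palindrome for \(n\neq m\). Applying Myhill--Nerode to the prefixes \(a^n\) then gives non-regularity. Concretely, the suffix \(s a^{-n}\) distinguishes \(a^n\) from \(a^m\) as soon as \(a^m s a^{-n}\) is not a palindrome. That holds automatically when the prefix \(a^m\) is not the reverse of \(a^{-n}\), which is true whenever \(n \neq m\).

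So the real content is one step: show that, for irreducible infinite non-\(\tilde A_1\) \(W\), there exist \(a\) and \(s\) with \(a^n s a^{-n}\) reduced for all \(n\). The tempting route is to take \(a\) to be a Coxeter element avoiding certain cancellations, or to take \(a = \) a straight element (Marquis), with \(s\) chosen outside its support-direction. But this also works for \(\tilde A_1\), where \(a=st\) gives \((st)^n s (ts)^n\), which is reduced. So this family alone cannot separate the cases.

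Instead, I would distinguish prefixes \(a^n b\) for a second word \(b\), exploiting that for \(|S|\ge 3\) the group has exponential growth, or at least rank \(\ge 2\) abelian-like growth. Concretely, I would take prefixes \(p_{n} = a^n b a^n\), aiming for a family where each prefix admits exactly one palindromic completion. A counting argument over the automaton's states, using that there are more than linearly many distinct required completions, would finish the proof. I expect this construction of the second independent direction, and its verification via the deletion/exchange condition, to be the main obstacle.
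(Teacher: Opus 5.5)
Your reduction to irreducible components and your ``if'' direction are correct and match the paper: \(\pal=\bigcup_j\pal(W_j)\), closure of regular languages under finite unions and under intersection with \(S_j^\ast\), and the \(\widetilde{A_1}\) case. The ``only if'' direction is not proved. It starts from a false step: for \(m\neq n\), the word \(a^m s\,\overline{a}^{\,n}\) is a palindrome if and only if \(a^{|m-n|}s\) is a palindrome. So it is \emph{not} automatically non-palindromic. This, and not the lack of a ``second independent direction'', is why \(\widetilde{A_1}\) escapes. There, the only \(a\) with \(a^n s\,\overline{a}^{\,n}\) reduced for all \(n\) is \((st)^k\), which begins with \(s\). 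The repair is to also require that \(a^k s\) is never a palindrome for \(k\ge 1\) (for instance, \(a\) does not begin with \(s\)). Then the suffixes \(s\,\overline{a}^{\,n}\) do separate the prefixes \(a^n\). This is essentially the paper's condition (C2), and it shows that a single family suffices. Your pivot to prefixes \(a^nba^n\) with an unspecified ``counting argument over completions'' is unnecessary, and as stated it is not a non-regularity criterion.

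The more serious gap is that the actual content is left open. You need, for every infinite irreducible system other than \(\widetilde{A_1}\), a word \(a\) and a letter \(s\) with \(a^n s\,\overline{a}^{\,n}\) reduced for all \(n\). The tool you propose cannot supply this. Every proper parabolic subgroup of an irreducible affine Coxeter group is finite, so for example \(\widetilde{A_n}\) with \(n\ge 3\) has no infinite rank-\(3\) parabolic. Inside a finite parabolic, \(a^n s\,\overline{a}^{\,n}\) cannot stay reduced. Also, the ``fact'' you attribute to Dyer is just the definition of reducedness; you need a computable certificate. The paper supplies this in three steps:
\begin{itemize}
\item[(i)] Adapting the classification flowchart, every connected infinite graph other than \(\widetilde{A_1}\) contains, after deleting vertices and \emph{lowering edge labels}, an affine graph other than \(\widetilde{A_1}\) or one of \(Y_3,Z_4,Z_5\).
\item[(ii)] For each such graph it gives explicit \(\mathbf{u},\mathbf{v},s\). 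It certifies that \(\mathbf{u}^r\mathbf{v}s\overline{\mathbf{v}}\,\overline{\mathbf{u}}^r\) is reduced by showing, via the numbers game, that \(\depth(u^rv(\alpha_s))=r|\mathbf{u}|+|\mathbf{v}|+1\), and then using \(\ell(s_\alpha)=2\depth(\alpha)-1\).
\item[(iii)] A Matsumoto-theorem argument shows that words reduced for such a subgraph stay reduced in \(W\). This is needed precisely because lowering labels is not a parabolic-subgroup relation.
\end{itemize}
Without some substitute for (i)--(iii), the ``only if'' direction remains unproved.
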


We give some context and motivation for this question.
Geodesic languages for Coxeter groups have been studied extensively.
These are languages consisting of reduced words such that each element of the group is represented by at least one word in the language.
For example, the language consisting of all reduced words in \(S^{\ast}\) is a geodesic language.
This language is regular in any Coxeter group (Brink--Howlett~\cite{bri.how:93}).
Similarly, the language of lexicographically minimal reduced words is also a geodesic language.
This language is also regular, recognised by a variant of Brink and Howlett's automaton~\cite{bri.how:93}.
It follows that the generating series that encodes the number of reduced words of each length in a fixed Coxeter system is a rational function (see, e.g.~the discussion around~\cite[Theorem 4.9.1 and Corollary A4.1.3]{bjo.bre:05}).

Stembridge asked in 1997 (see~\cite[Problem 1.1]{bre:24}) whether the generating series that encodes the reflections of every length in a Coxeter group is rational.
This question was answered positively in~\cite{man:99}.
Producing an automaton that recognises exactly one reduced word per reflection would give a direct proof of this fact.
The language \(\pal\) is the natural candidate geodesic language for the set of all reflections in \(W\).
It is a natural question to ask whether this language is regular: had it been regular, we may have hoped for a Brink--Howlett type construction for an automaton to recognise the lexicographically minimal palindromic reduced words.

Let us also mention some related results.
Theorem 1.1 of~\cite{bia.hoh.sas:26} proves that the language of all \emph{reflection-prefixes} (that is, words of the form \(ws\) where \(wsw^{-1}\) is a reduced word) is regular for any Coxeter system.
It follows that the generating series for the language \(\pal\) is rational.
For the finite Coxeter groups, for which we know that \(\pal\) is regular, an exhaustive enumeration of \(\pal\) has been done by Milićević~\cite{mil:25}.
The only other regular case (up to products) in~\cref{thm:all-coxeter-groups-main} is the infinite dihedral group; an enumeration of \(\pal\) in this case is easy (see~\cref{prop:affine-a1} and its proof).
Finally, we draw the reader's attention to~\cref{prop:bootstrap-cardinality}, a curious byproduct of our proof strategy that gives an alternate proof of the classification of all finite Coxeter groups (see~\cref{rem:new-proof-classification}).

\subsection*{Acknowledgements}
I thank Hoel Queffelec and the students in my class on Coxeter groups for inspiration and conversations about~\cite[Exercise 1.10]{bjo.bre:05}, which pointed me towards this problem.
I am grateful to Christophe Hohlweg for a careful reading of an earlier draft of this document and insightful questions that led to a significant strengthening of the main theorem.
I thank Anand Deopurkar, Yixuan Li, and Anthony Licata for clarifying conversations.
I acknowledge support from the France-Australia Mathematical Sciences and Interactions ANU-CNRS International Laboratory, as well as the ARC grants DE240100447 and DP240101084.

\subsection*{AI Statement}
The author did not use AI to assist with any aspect of this document.

\section{Basic theory of Coxeter groups}\label{sec:coxeter-basics}
We briefly recall the standard facts we need about Coxeter groups, omitting most proofs.
We refer the reader to the standard reference books~\cite{bjo.bre:05,hum:90} for more details.
\begin{definition}
  A \defn{Coxeter system} \((W,S)\) consists of a group \(W\) and a finite generating set \(S\).
  The relations of \(W\) are encoded by a function \(m \colon S \times S \to \{2, \ldots, \} \cup \{\infty\}\) with the property that \(m(s,s') = 2\) if and only if \(s = s'\).
  Then the relations in \(W\) are
  \((ss')^{m(s,s')} = 1\)
  for any \(s, s' \in S\) such that \(m(s,s') \neq \infty\).
\end{definition}
It is customary to encode the information of the function \(m\) as a labelled undirected graph, called the \emph{Coxeter graph} and often denoted \(\Gamma\).
(See~\cref{tab:finite-coxeter} later in this paper for several examples.)
We use the following conventions, which match~\cite[Section 1.1]{bjo.bre:05} as well as~\cite[Section 2.1]{hum:90}.
\begin{enumerate}
\item The vertex set of \(\Gamma\) is in bijection with the elements of \(S\).
\item We draw an edge between \(s\) and \(s'\) if \(m(s,s') \geq 3\).
\item If \(m(s,s') \geq 4\), then we label the edge with \(m(s,s')\).
\end{enumerate}
If \(m(s,s') \leq 3\) for all \(s,s' \in S\), then we call the corresponding Coxeter system or Coxeter graph \emph{simply-laced}.
Conversely, a Coxeter graph \(\Gamma\) specifies an associated Coxeter system \((W_{\Gamma}, S_{\Gamma})\).

Henceforth in this section, fix a Coxeter system \((W,S)\).
\begin{definition}
  The elements of \(S\) are called the \defn{simple reflections} of \(W\).
  A~\defn{reflection} of \(W\) is any conjugate of an element of \(S\).
  That is, the reflections are \(\{w s w^{-1} \mid s \in S, w \in W\}\).
\end{definition}
We note again that because elements of \(S\) have order two, every element \(w \in W\) can simply be expressed as a product of elements of \(S\).
Indeed, the inverse of any product of elements of \(S\) is simply the reversed string.

Recall that \(S^{\ast}\) is the free monoid on the generators \(S\).
A \defn{word} in \((W,S)\) is an element of \(S^{\ast}\).
There is a natural map \(S^\ast \to W\), taking a word to its product in \(W\).
For clarity, we write words with boldface letters (e.g.~\(\mathbf{w}\)), and their images in \(W\) with the corresponding non-boldface letter (e.g.~\(w\)).
In this case we say that \(\mathbf{w}\) represents \(w\).
We denote by \(\overline{\mathbf{w}}\) the reverse of a word \(\mathbf{w}\).
We denote by \(|\mathbf{w}|\) the number of letters in \(\mathbf{w}\).
\begin{definition}
  Let \(w \in W\).
  Then the \defn{length} of \(w\), denoted \(\ell(w)\), is the minimum of the quantity \(|\mathbf{w}|\) over all words \(\mathbf{w}\) that represent \(w\).
\end{definition}
If \(\mathbf{w}\) is any word representing \(w\), we clearly have \(\ell(w) \leq |\mathbf{w}|\).
We say that \(\mathbf{w}\) is \defn{reduced} if \(|\mathbf{w}| = \ell(w)\).
The length function has a number of useful and pleasing properties, whose discussion we omit.

The Coxeter system \((W,S)\) has a distinguished real representation \(V_W\) called the~\defn{standard geometric representation}, defined as follows.
The underlying vector space of \(V_W\) is the free real vector space generated by elements \(\{\alpha_s \mid s \in S\}\).
Fix a symmetric bilinear form on \(V_W\), defined on the basis as
\begin{equation}\label{eq:bilinear}
  (\alpha_s, \alpha_{s'}) = - \cos\left(\frac{\pi}{m(s,s')}\right),
\end{equation}
where we set \(\cos(\pi/\infty) = 1\).
Now the action of a generator \(s \in S \subset W\) on an element \(v \in V_W\) is defined as
\[s(v) = v - 2 (\alpha_s, v) \alpha_s.\]
\begin{proposition}\label{prop:geom-rep-faithful}
  The linear maps \(s \colon V_W \to V_W\) defined by
  \(s(v) = v - 2 (\alpha_s, v) \alpha_s\)
  extend to a faithful linear representation \(W \to GL(V_W)\), called the standard geometric representation of \((W,S)\).
\end{proposition}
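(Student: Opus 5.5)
The statement to prove is \cref{prop:geom-rep-faithful}. There are two parts: first that the formulas define a homomorphism, and second that this homomorphism is injective. I would follow the classical route of Humphreys~\cite[Section 5.3--5.4]{hum:90} and Bourbaki.

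\textbf{The maps define a representation.} Each map \(\sigma_s(v) = v - 2(\alpha_s,v)\alpha_s\) is a reflection. It preserves the bilinear form and satisfies \(\sigma_s^2 = 1\), because \((\alpha_s,\alpha_s)=1\).

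To check the relation \((\sigma_s\sigma_{s'})^{m} = 1\) for \(m = m(s,s') < \infty\), I restrict to the plane \(U = \mathrm{span}(\alpha_s,\alpha_{s'})\).
\begin{itemize}
\item On \(U\) the form is positive definite, since \(|\cos(\pi/m)|<1\). In an orthonormal frame, \(\sigma_s\) and \(\sigma_{s'}\) act as reflections in lines meeting at angle \(\pi/m\), so \(\sigma_s\sigma_{s'}\) acts on \(U\) as a rotation by \(2\pi/m\).
\item Because the form is nondegenerate on \(U\), we have \(V_W = U \oplus U^{\perp}\).
\item Both maps fix \(U^\perp\) pointwise.
\end{itemize}
Hence \((\sigma_s\sigma_{s'})^m = 1\), and the universal property of the presentation gives a homomorphism \(\sigma\colon W \to GL(V_W)\).

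For \(m=\infty\) no relation needs checking. It is still worth recording that \(\sigma_s\sigma_{s'}\) has infinite order there. It acts on \(U\) by a unipotent matrix: one computes \((\sigma_s\sigma_{s'})^k\alpha_s = (2k+1)\alpha_s + 2k\alpha_{s'}\). The same holds for \(m<\infty\): the dihedral subgroup \(\langle s,s'\rangle\) maps isomorphically onto its image, since the image of \(\sigma_s\sigma_{s'}\) has order exactly \(m\).

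\textbf{Faithfulness.} The heart of the proof is the key lemma: for \(w \in W\) and \(s\in S\),
\[
\ell(ws) > \ell(w) \;\Longrightarrow\; w(\alpha_s) > 0,
\]
where \(v>0\) means all coefficients of \(v\) in the basis \(\{\alpha_s\}\) are nonnegative. I would prove this by induction on \(\ell(w)\).
\begin{itemize}
\item If \(w \neq 1\), choose \(s' \neq s\) with \(\ell(ws') < \ell(w)\); such \(s'\) exists because \(\ell(ws)>\ell(w)\) excludes \(s'=s\).
\item Let \(I=\{s,s'\}\) and \(W_I\) the subgroup it generates. Consider the set of factorizations \(w = x y\) with \(x \in W\), \(y \in W_I\), and \(\ell(w)=\ell(x)+\ell_I(y)\), where \(\ell_I\) denotes length in the generators \(I\). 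Among these, choose one with \(\ell(x)\) minimal. (The factorization \(x = ws'\), \(y=s'\) shows the set is nonempty, so \(\ell(x) \le \ell(w)-1\).)
\item Minimality forces \(\ell(xs)>\ell(x)\) and \(\ell(xs')>\ell(x)\). By induction, \(x\alpha_s>0\) and \(x\alpha_{s'}>0\).
\item One checks \(\ell_I(ys) > \ell_I(y)\); otherwise \(\ell(ws)<\ell(w)\). A direct computation in the rank-two representation then shows \(y\alpha_s = a\alpha_s + b\alpha_{s'}\) with \(a,b\ge 0\). This is a case analysis on whether the reduced word for \(y\) in \(W_I\) ends in \(s'\), using sine formulas for the coefficients.
\item Therefore \(w\alpha_s = a\,x\alpha_s + b\,x\alpha_{s'} > 0\).
\end{itemize}

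Granting the lemma, the consequence is: if \(\ell(ws)<\ell(w)\), apply the lemma to \(ws\) to get \(w\alpha_s = -(ws)\alpha_s < 0\). Now suppose \(\sigma(w)=1\) with \(w\ne 1\). Pick \(s\) with \(\ell(ws)<\ell(w)\). Then \(\alpha_s = w\alpha_s<0\), which is a contradiction. So \(\sigma\) is injective.

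I expect the main obstacle to be the key lemma, and specifically two points within it. The first is the bookkeeping of lengths in \(W_I\) versus \(W\). The second is the explicit rank-two positivity check, including the \(m=\infty\) case. Since the paper says it omits most proofs, I would present this argument briefly and cite~\cite[Theorem 5.4]{hum:90} for the details.
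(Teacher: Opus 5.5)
Your proposal is correct, and it is the standard argument (\cite[Sections 5.3--5.4]{hum:90}: the rank-two analysis, then Theorem 5.4 and its corollary) to which the paper defers; the paper states this proposition without proof and only cites the standard references. Your argument relies on \((\alpha_s,\alpha_s)=1\), which is the standard convention \(m(s,s)=1\) and is what the proof requires. The paper's definition instead literally reads \(m(s,s')=2\) if and only if \(s=s'\), which is a misprint that would give \((\alpha_s,\alpha_s)=0\).
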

\begin{definition}
  The basis elements \(\alpha_t \in V_W\) for \(t \in S\) are called \defn{simple roots}.
  More generally, a \defn{root} is any element of \(V_W\) that can be expressed as \(w(\alpha_t)\) for some \(t \in S\) and some \(w \in W\).
\end{definition}
We call the vector space \(V_W\) the \defn{root space}.
A \defn{positive root} is a non-negative linear combination of the basis elements \(\alpha_s\).
A \defn{negative root} it is a non-positive linear combination of the basis elements \(\alpha_s\).
It is known that every root is either a positive root or a negative root (see, e.g.~\cite[Proposition 4.2.5]{bjo.bre:05}).
If \(\alpha = w(\alpha_t)\) is a root for some \(t \in S\), we denote by \(s_{\alpha}\) the reflection in the root \(\alpha\), which is equal to
  \[s_{\alpha} = w t w^{-1}.\]

\begin{definition}\label{def:depth}
  Let \(\alpha\) be a root.
  Suppose that \(\mathbf{w}\) is a minimal length word such that \(\alpha = w(\alpha_j)\) for some simple root \(\alpha_j\).
  Then \defn{depth} of \(\alpha\) is defined to be the quantity \(|\mathbf{w}| + 1\).
\end{definition}

The following statement is~\cite[Lemma~4.6.2]{bjo.bre:05}, and explains how the depth of a root changes when it is acted upon by a simple reflection.
\begin{proposition}\label{prop:depthchange}
  Let \(s \in S\) and \(\alpha \neq \alpha_s\) be a positive root.
  Then
  \[
    \depth(s(\alpha)) =
    \begin{cases}
      \depth(\alpha) - 1,&(\alpha, \alpha_s) > 0,\\
      \depth(\alpha),&(\alpha, \alpha_s) = 0,\\
      \depth(\alpha) + 1,&(\alpha, \alpha_s) < 0.
    \end{cases}
  \]
\end{proposition}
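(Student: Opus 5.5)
The plan is to argue by induction on \(\depth(\alpha)\), reducing the key step to a computation inside a rank-two parabolic subgroup. First, the case \((\alpha,\alpha_s)=0\) is immediate: then \(s(\alpha)=\alpha\). Next, in general \(s(\alpha)=\alpha-2(\alpha,\alpha_s)\alpha_s\). Since \(\alpha\neq\alpha_s\) is positive, and \(s\) permutes the positive roots other than \(\alpha_s\), the root \(s(\alpha)\) is again positive. Prepending \(s\) to a minimal word for \(\alpha\) gives \(\depth(s(\alpha))\le\depth(\alpha)+1\), and by symmetry \(\depth(\alpha)\le\depth(s(\alpha))+1\). Moreover \((s(\alpha),\alpha_s)=-(\alpha,\alpha_s)\). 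So the two nonzero cases are equivalent to each other, and it suffices to prove one implication: if \((\alpha,\alpha_s)<0\) then \(\depth(s(\alpha))>\depth(\alpha)\). Equivalently: if \((\alpha,\alpha_s)>0\) then \(\depth(s(\alpha))<\depth(\alpha)\), since the other options \(=\) and \(+1\) must be excluded.

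For the induction, let \(\alpha\) have depth \(d>1\) with \((\alpha,\alpha_s)<0\), and suppose for contradiction that \(\depth(s(\alpha))\le d\). Choose a minimal word \(\mathbf{w}=r\mathbf{w}'\) with \(\alpha=w(\alpha_t)\), and set \(\gamma=w'(\alpha_t)\). Then \(\gamma\) is positive of depth \(d-1\) and \(\alpha=r(\gamma)\). Since \(\depth(r(\alpha))=d-1<d\), the inductive hypothesis, applied in contrapositive form to lower depths, forces \((\alpha,\alpha_r)>0\). In particular \(r\neq s\).

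Now consider the dihedral parabolic subgroup \(W_{\{s,r\}}\) acting on \(\alpha\). Within the orbit \(W_{\{s,r\}}\alpha\), pick a root \(\beta\) of minimal depth, so that \(\alpha=u(\beta)\) with \(u\in W_{\{s,r\}}\). By minimality and induction, \((\beta,\alpha_s)\le0\) and \((\beta,\alpha_r)\le0\). Every root in the orbit then has the form \(\beta+a\alpha_s+b\alpha_r\) with \(a,b\ge0\). Its pairings with \(\alpha_s\) and \(\alpha_r\) are governed by the rank-two geometry, namely the dihedral root system with angle \(\pi/m(s,r)\) shifted by the nonpositive vector \(((\beta,\alpha_s),(\beta,\alpha_r))\).

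The main obstacle is the rank-two computation. One has to show that along each of the two alternating words \(srsr\cdots\) and \(rsrs\cdots\) applied to \(\beta\), the depth increases by exactly one at each step until the orbit closes up (when \(m(s,r)<\infty\)). Equivalently, the pairing with the next simple root applied is negative at every stage. From this, depth in the orbit equals \(\depth(\beta)+\ell(u)\) for \(u\) a minimal element moving \(\beta\) to the given root. Then \((\alpha,\alpha_s)<0\) says that \(s\) is not a left descent of \(u\), so \(\ell(su)=\ell(u)+1\) and \(\depth(s(\alpha))=d+1\), which is the desired contradiction.

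To carry out the computation, I would first try to write the coefficients explicitly via the Chebyshev-type formulas \(\sin(k\pi/m)/\sin(\pi/m)\) for the dihedral roots. I would then check sign conditions using \(\cos(\pi/m)\ge\tfrac12\), treating the cases \(m=\infty\) and \((\beta,\alpha_s)=(\beta,\alpha_r)=0\) separately.
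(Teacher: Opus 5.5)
The paper does not prove this statement; it quotes it from \cite[Lemma~4.6.2]{bjo.bre:05}. Your preliminary reductions are fine: the case \((\alpha,\alpha_s)=0\), positivity of \(s(\alpha)\), the bound \(|\depth(s(\alpha))-\depth(\alpha)|\le 1\), and the symmetry \(\alpha\leftrightarrow s(\alpha)\) that reduces everything to ``negative pairing implies depth goes up''. The gap is that the inductive step is circular. Your hypothesis is this implication for roots of depth \(<d\). To get \((\alpha,\alpha_r)>0\) from \(\depth(r(\alpha))=d-1\), you must exclude \((\alpha,\alpha_r)<0\). But a root of depth \(d\) with negative pairing against \(\alpha_r\), whose image under \(r\) has smaller depth, is exactly a counterexample at depth \(d\) with \(r\) in place of \(s\), which is what the inductive step is supposed to rule out. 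Phrased at \(\gamma=r(\alpha)\), which has depth \(d-1\), you need the \emph{converse} ``\(\depth(r(\gamma))>\depth(\gamma)\Rightarrow(\gamma,\alpha_r)<0\)'', and that does not follow from your hypothesis. The same boundary problem affects ``by minimality and induction, \((\beta,\alpha_s)\le 0\)'': if \(\depth(\beta)=d-1\) and \(\depth(s(\beta))=d\), the hypothesis says nothing.

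More fundamentally, the rank-two computation cannot close the argument. Chebyshev-type formulas will give you the signs of the pairings along the alternating words. But turning ``the pairing is negative'' into ``the depth goes up by one'' is the proposition itself. You need it at roots of depth \(\ge d\): at \(\alpha\), and at \(s(\alpha)\), whose depth \(d+1\) is the very conclusion. The formula \(\depth(u(\beta))=\depth(\beta)+\ell(u)\) is a \emph{lower} bound on depth, and nothing in your plan produces lower bounds on depth independently of the lemma.

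One non-circular source of such bounds is reflection length. Let \(s_1\cdots s_L\) be a reduced word for \(t=s_\alpha\). Since \(t(\alpha)<0\), we have \(\alpha=s_L\cdots s_{i+1}(\alpha_{s_i})\) for some \(i\). Then \(s_\alpha=(s_L\cdots s_{i+1})s_i(s_{i+1}\cdots s_L)\), so the identity \(t\,s_\alpha=1\) says \(s_1\cdots s_{i-1}=s_L\cdots s_{i+1}=:u\). Now \(L\le 2\ell(u)+1\), \(\ell(u)\le\min(i-1,L-i)\) and \((i-1)+(L-i)=L-1\) together force \(\ell(u)=i-1=L-i\). Hence \(\ell(s_\alpha)=2\depth(\alpha)-1\) (this is \cref{prop:depth-of-reflection}), obtained here without the lemma. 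Next, if \((\alpha,\alpha_s)<0\), then \(s_\alpha(\alpha_s)=\alpha_s-2(\alpha_s,\alpha)\alpha>0\), so \(\ell(s_\alpha s)=\ell(ss_\alpha)=\ell(s_\alpha)+1\). Since \(ss_\alpha s=s_{s(\alpha)}\neq s_\alpha\), the exchange condition gives \(\ell(ss_\alpha s)=\ell(s_\alpha)+2\). Indeed, deleting the leading \(s\) would give \(ss_\alpha s=s_\alpha\), and deleting any other letter would give \(\ell(s_\alpha s)<\ell(s_\alpha)\). Therefore \(\depth(s(\alpha))=\depth(\alpha)+1\).
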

The \defn{dual numbers game} (see, e.g.~\cite[Section 4.6]{bjo.bre:05}) is a particularly convenient method to apply letters in the Coxeter generators to vectors in the root space.
For this method, we represent a vector \(\beta = \sum_{s \in S} c_s \alpha_s\) in the root space by writing the coefficients directly on the Coxeter graph.
Specifically, we label the graph by writing \(c_s\) at the node corresponding to \(s \in S\).
For example, the following picture represents the vector \(5 \alpha_1 + 3 \alpha_2 + 7\alpha_3 + \alpha_4\) if the nodes are labelled \(1,2,3,4\) in order from left to right.
\begin{center}
  \begin{tikzpicture}[scale=0.5, baseline=(current bounding box.center), font=\scriptsize]
    \draw (0 cm,0) -- (6 cm,0);
    \draw[fill=white] (0 cm, 0 cm) circle (0.4cm) node{\(5\)};
    \draw[fill=white] (2 cm, 0 cm) circle (0.4cm) node{\(3\)};
    \draw[fill=white] (4 cm, 0 cm) circle (0.4cm) node {\(7\)};
    \draw[fill=white] (6 cm, 0 cm) circle (0.4cm) node {\(1\)};
  \end{tikzpicture}
\end{center}
Now if \(s_i \in S\), the coefficient that differs between \(\beta\) and \(s_i(\beta)\) is the coefficient of \(\alpha_{s_i}\) itself, which changes as follows.
\begin{enumerate}
\item Since \(s_i(\alpha_i) = - \alpha_i\), we first negate \(c_{s_i}\).
\item Whenever \(j\) is a vertex connected to \(i\) in the Coxeter graph, we have \(s_i(\alpha_j) = \alpha_j -2(\alpha_i,\alpha_j)\alpha_i\).
  Thus we add a contribution of \(-2(\alpha_i,\alpha_j) c_{s_j}\).
\end{enumerate}
The overall change then is
\[c_{s_i} \mapsto - c_{s_i} - 2 \sum_j(\alpha_i,\alpha_j) c_{s_j},\]
where \(j\) ranges over all nodes connected to \(i\) in the Coxeter graph.
In particular if the edge label is \(m(s_i,s_j) = 3\), then we have \(-2(\alpha_i,\alpha_j) = 1\).

Thus in simply-laced cases, we replace \(c_i\) by \(-c_i\) plus the sum of the labels of its neighbours.
This is easy to do visually; for instance in the example above, the element \(s_3\) acts as follows.
\[
  \begin{tikzpicture}[scale=0.5, baseline=(current bounding box.center), font=\scriptsize]
    \draw (0 cm,0) -- (6 cm,0);
    \draw[fill=white] (0 cm, 0 cm) circle (0.4cm) node{\(5\)};
    \draw[fill=white] (2 cm, 0 cm) circle (0.4cm) node{\(3\)};
    \draw[fill=white] (4 cm, 0 cm) circle (0.4cm) node {\(7\)};
    \draw[fill=white] (6 cm, 0 cm) circle (0.4cm) node {\(1\)};
  \end{tikzpicture}
  \quad
  \xmapsto{s_3}
  \quad
  \begin{tikzpicture}[scale=0.5, baseline=(current bounding box.center), font=\scriptsize]
    \draw (0 cm,0) -- (6 cm,0);
    \draw[fill=white] (0 cm, 0 cm) circle (0.4cm) node{\(5\)};
    \draw[fill=white] (2 cm, 0 cm) circle (0.4cm) node{\(3\)};
    \draw[fill=white] (4 cm, 0 cm) circle (0.4cm) node {\(-3\)};
    \draw[fill=white] (6 cm, 0 cm) circle (0.4cm) node {\(1\)};
  \end{tikzpicture}  
\]
We will need the following useful but easy corollary of~\cref{prop:depthchange}, which is also~\cite[Lemma 4.6.4]{bjo.bre:05}.
\begin{corollary}\label{cor:numbers-game-depth}
  Let \(\beta\) be an element in the root space with \(\beta = \sum_{s \in S} c_s \alpha_s\).
  Let \(s_i \in S\).
  Then the depth of \(s_i(\beta)\) is greater than the depth of \(\beta\) if and only if the label at the \(s_i\)th node increases via the corresponding move of the dual numbers game.
\end{corollary}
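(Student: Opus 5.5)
This corollary follows directly from \cref{prop:depthchange} together with the explicit formula for the dual numbers game move. We will compare two things: how the \(s_i\)-coefficient changes under \(s_i\), and the sign of \((\beta,\alpha_{s_i})\). Throughout, \(\beta\) is understood to be a positive root with \(\beta \neq \alpha_{s_i}\), so that depth is defined and \cref{prop:depthchange} applies. (If \(\beta = \alpha_{s_i}\), then \(s_i(\beta)\) is negative and the label visibly drops from \(1\) to \(-1\); this case is excluded in \cite[Lemma 4.6.4]{bjo.bre:05}, and we would note it separately.)

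First we compute the move explicitly. Write \(\beta = \sum_s c_s \alpha_s\). Then
\[s_i(\beta) = \beta - 2(\alpha_{s_i},\beta)\alpha_{s_i},\]
so only the coefficient of \(\alpha_{s_i}\) changes. It goes from \(c_{s_i}\) to \(c_{s_i} - 2(\alpha_{s_i},\beta)\). Expanding \((\alpha_{s_i},\beta)\) using \((\alpha_{s_i},\alpha_{s_i}) = 1\) gives
\[c_{s_i} - 2c_{s_i} - 2\sum_{j \neq i}(\alpha_i,\alpha_j)c_{s_j}.\]
This recovers the rule of the dual numbers game stated above; the sum may be restricted to neighbours, since non-adjacent nodes have \((\alpha_i,\alpha_j) = -\cos(\pi/2) = 0\). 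Hence the change in the label at node \(s_i\) is exactly \(-2(\alpha_{s_i},\beta)\). The label strictly increases if and only if \((\beta,\alpha_{s_i}) < 0\).

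By \cref{prop:depthchange}, \(\depth(s_i(\beta)) > \depth(\beta)\) holds precisely when \((\beta,\alpha_{s_i}) < 0\). In the other two cases the depth decreases or stays the same. Combining the two equivalences gives the corollary.

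The only mild obstacle is bookkeeping of hypotheses. \cref{prop:depthchange} requires \(\beta\) to be a positive root different from \(\alpha_{s_i}\), and this should be stated explicitly. Otherwise the argument is a one-line sign comparison, using only that the bilinear form is symmetric and has \((\alpha_s,\alpha_s) = 1\).
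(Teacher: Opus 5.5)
Your proof is correct and matches the paper, which gives no separate argument and presents this as an immediate consequence of \cref{prop:depthchange} (equivalently \cite[Lemma 4.6.4]{bjo.bre:05}); your observation that the label at node \(s_i\) changes by exactly \(-2(\alpha_{s_i},\beta)\) is the one-line sign comparison that makes this precise. Restricting explicitly to positive roots \(\beta \neq \alpha_{s_i}\) is the right bookkeeping, since the statement as written is loose there: with the paper's definition of depth, \(\beta = \alpha_{s_i}\) has depth rising from \(1\) to \(2\) while the label drops from \(1\) to \(-1\).
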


We also have the following well-known result; we use it extensively in this paper to check whether a given palindromic word is indeed reduced.
See, e.g.~\cite[Proposition 2.4]{dye.fis.hoh.ea:24} for a proof.
Note that in the above reference, they use a definition of depth that is off by one from the one we give in this paper.
\begin{proposition}\label{prop:depth-of-reflection}
  Let \(\alpha\) be a positive root of depth \(d\), and let \(s_{\alpha}\) be the reflection in \(\alpha\).
  Then
  \[\ell(s_{\alpha}) = 2\depth(\alpha) - 1.\]
\end{proposition}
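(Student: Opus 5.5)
The plan is to prove the equality \(\ell(s_{\alpha}) = 2\depth(\alpha) - 1\) by induction on \(d = \depth(\alpha)\). The base case \(d = 1\) is immediate: \(\alpha\) is then a simple root \(\alpha_t\), so \(s_\alpha = t\) and \(\ell(s_\alpha) = 1\). Alongside the results recalled above, I will use the standard exchange criterion: for \(w \in W\) and \(s \in S\), we have \(\ell(ws) < \ell(w)\) if and only if \(w(\alpha_s)\) is a negative root, and then \(\ell(ws) = \ell(w) - 1\). The same holds on the left, with \(w^{-1}(\alpha_s)\) in place of \(w(\alpha_s)\).

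First I find a suitable simple reflection for the inductive step. Suppose \(d > 1\) and write \(\alpha = w(\alpha_t)\) with \(\mathbf{w}\) of minimal length \(d - 1 \geq 1\). Let \(s\) be the first letter of \(\mathbf{w}\), so \(\mathbf{w} = s\mathbf{w}'\). Then \(s(\alpha) = w'(\alpha_t)\) has depth at most \(d-1\). Since \(\alpha \neq \alpha_s\) (otherwise \(d = 1\)), \cref{prop:depthchange} forces \(\depth(s(\alpha)) = d - 1\) and \((\alpha, \alpha_s) > 0\). Moreover \(s(\alpha)\) is a positive root, and \(s_{s(\alpha)} = s\, s_\alpha\, s\). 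By the inductive hypothesis, \(\ell(s\, s_\alpha\, s) = 2d - 3\). So it suffices to show that conjugating \(s_\alpha\) by \(s\) drops its length by exactly \(2\).

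Next I carry out the two length drops. Put \(c = (\alpha, \alpha_s) > 0\) and compute
\[ s_\alpha(\alpha_s) = \alpha_s - 2c\,\alpha. \]
Because \(\alpha\) is a positive root different from \(\alpha_s\), it has a strictly positive coefficient on some simple root \(\alpha_u\) with \(u \neq s\); roots are never proper multiples of simple roots. Hence \(s_\alpha(\alpha_s)\) has a strictly negative coefficient at \(\alpha_u\). Since every root is positive or negative, \(\beta := s_\alpha(\alpha_s)\) is a negative root, so \(\ell(s_\alpha s) = \ell(s_\alpha) - 1\).

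For the second drop, apply the left-hand criterion to \(v = s_\alpha s\), using \(v^{-1}(\alpha_s) = s\, s_\alpha(\alpha_s) = s(\beta)\). The root \(\beta\) is negative and is not \(-\alpha_s\), because it has a nonzero coefficient at \(\alpha_u\). A simple reflection \(s\) permutes the negative roots other than \(-\alpha_s\), so \(s(\beta)\) is again negative. Hence \(\ell(s\, s_\alpha\, s) = \ell(s_\alpha) - 2\). Combining with the inductive hypothesis gives \(\ell(s_\alpha) = 2d - 3 + 2 = 2d - 1\), which completes the induction.

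The main point of care is the first drop, i.e. showing \(s_\alpha(\alpha_s)\) is negative. It relies on \(\alpha\) having support outside \(\{s\}\), and on the positivity of \((\alpha, \alpha_s)\) coming from \cref{prop:depthchange}. As a consistency check, the upper bound \(\ell(s_\alpha) \le 2d - 1\) is also visible directly from the word \(\mathbf{w}\, t\, \overline{\mathbf{w}}\), which has length \(2d - 1\).
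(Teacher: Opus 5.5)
Your proof is correct. The paper has no argument of its own here: it cites \cite[Proposition 2.4]{dye.fis.hoh.ea:24} and warns that the depth convention there is shifted by one. So your induction is a self-contained replacement for the citation, not a rival to a proof in the paper.

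The structure is sound. The upper bound comes from the palindromic word \(\mathbf{w}\,t\,\overline{\mathbf{w}}\). The lower bound comes from your key step: if \(\alpha \neq \alpha_s\) and \((\alpha,\alpha_s) > 0\), then \(\ell(s\,s_\alpha\,s) = \ell(s_\alpha) - 2\). You prove this by two applications of the sign criterion for length. First, \(s_\alpha(\alpha_s) = \alpha_s - 2(\alpha,\alpha_s)\alpha\) is negative, because of a coordinate \(u \neq s\). Second, \(s\) keeps it negative, because it is not \(-\alpha_s\). Both steps check out. \cref{prop:depthchange} is exactly what guarantees that the first letter of a minimal word yields such an \(s\).

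You use a few standard facts that the paper does not recall, and you should cite them explicitly, e.g.\ from \cite[Chapter 4]{bjo.bre:05}:
\begin{enumerate}
\item the \(W\)-invariance of the bilinear form, which gives \(s_\alpha(v) = v - 2(\alpha,v)\alpha\) and \((\beta,\beta) = 1\) for every root \(\beta\), so the only roots proportional to \(\alpha_s\) are \(\pm\alpha_s\);
\item that \(s\) permutes the positive roots other than \(\alpha_s\);
\item that \(\ell(ws) < \ell(w)\) if and only if \(w(\alpha_s)\) is negative.
\end{enumerate}

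Your route makes the paper independent of the external reference and of its shifted convention. It also shows directly that \(\mathbf{w}\,t\,\overline{\mathbf{w}}\) is reduced for any minimal \(\mathbf{w}\) with \(\alpha = w(\alpha_t)\). That is precisely the form in which the proposition is used in \cref{lem:common-proof-schema}.
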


We also recall for future reference the classification of the finite and affine Coxeter groups from e.g.~\cite[Theorem 2.7]{hum:90}.
\begin{theorem}
  Let \(\Gamma\) be a connected Coxeter graph.
  Then the associated Coxeter group \(W_{\Gamma}\) is finite if and only if \(\Gamma\) is one of the types listed in the first two columns of~\cref{tab:finite-coxeter}.
\end{theorem}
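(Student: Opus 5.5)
We follow the classical route of \cite[Chapter 2]{hum:90} and split the proof into an analytic step and a combinatorial step. Write \(B\) for the bilinear form~\eqref{eq:bilinear} on \(V_W\), with matrix \((-\cos(\pi/m(s,s')))_{s,s'}\). The analytic step is to show that, for \(\Gamma\) connected, \(W_\Gamma\) is finite if and only if \(B\) is positive definite. The combinatorial step is to list all connected Coxeter graphs whose form is positive definite, and to check that this list is exactly the one in~\cref{tab:finite-coxeter}.

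For the forward direction of the analytic step, suppose \(W\) is finite. Averaging any inner product over \(W\) gives a \(W\)-invariant positive definite form \(\langle\cdot,\cdot\rangle\) on \(V_W\). We then use connectedness of \(\Gamma\) to see that \(V_W\) has no nontrivial proper \(W\)-invariant subspace outside the radical of \(B\). The reason is that a \(W\)-stable subspace either contains some \(\alpha_s\) or lies in every reflecting hyperplane, and if it contains one \(\alpha_s\) then connectedness spreads this to all \(\alpha_{s'}\). With \(W\) finite and \(V_W\) completely reducible, this forces \(V_W\) to be irreducible. By Schur's lemma, \(B\) is then a scalar multiple of \(\langle\cdot,\cdot\rangle\), and since \(B(\alpha_s,\alpha_s)=1>0\), \(B\) is positive definite.

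For the converse, if \(B\) is positive definite then \(W\) lies in the compact group \(O(V_W,B)\). It then suffices to show that \(W\) is discrete there. For this we use the contragredient action on \(V_W^\ast\) and Tits' fundamental chamber \(C\), on which the translates \(wC\) are pairwise disjoint for distinct \(w\). Discreteness follows, and a discrete subgroup of a compact group is finite. This part is routine once Tits' lemma is taken from the references.

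For the combinatorial step, we first prove that every subgraph of a positive definite graph is positive definite, and the same for any graph obtained by lowering edge labels. Both follow from looking at principal submatrices and at a vector of absolute values of coordinates. Next we exhibit a short list of connected graphs that have a nonzero null vector for \(B\), namely the affine graphs \(\tilde A_n\) (cycles), \(\tilde B_n,\tilde C_n,\tilde D_n,\tilde E_{6,7,8},\tilde F_4,\tilde G_2\), together with a few small graphs carrying larger labels. For each we write down an explicit positive labelling, using the dual numbers game to check that every node's label is fixed. By the first lemma, none of these graphs can occur as a subgraph of a positive definite graph.

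Excluding these subgraphs step by step then forces the shape of the graph. It must be a tree, since cycles are excluded by \(\tilde A_n\). It has at most one label \(\geq 4\), at most one branch point, and it cannot have a label together with a branch point. Labels \(\geq 6\) can only occur in a rank-two graph. The lengths of the three arms at a branch point are bounded by \(\tilde E_6,\tilde E_7,\tilde E_8\), and the lengths of the arms next to a label \(4\) or \(5\) are bounded by \(\tilde F_4\) and the corresponding \(H\)-type forbidden graphs. The survivors are exactly \(A_n,B_n,D_n,E_{6,7,8},F_4,H_{3,4},I_2(m)\), and each of these is checked to be positive definite by computing leading principal minors. We expect the main obstacle to be the bookkeeping for the \(H\)-type exclusions with label \(5\), where the null vectors involve the golden ratio, together with the discreteness argument in the analytic step.
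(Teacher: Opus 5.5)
Your overall plan (for connected \(\Gamma\), \(W_\Gamma\) is finite if and only if \(B\) is positive definite, then classify the positive definite graphs by forbidden subgraphs) is the standard argument of \cite{hum:90} that the paper cites. The analytic step, the monotonicity lemma for deleted vertices and lowered labels, and the exclusions via the affine graphs are all fine.

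The step that fails is the label-\(5\) exclusion. The forbidden graphs needed there are \(Z_4\) and \(Z_5\) from~\eqref{eq:y3-z4-z5}. They contain no affine subgraph: lowering the \(5\) gives \(F_4\) and \(B_5\), which are positive definite. So they must be certified directly, and your proposed certificate cannot exist for them. That certificate is a positive labelling fixed by every move of the dual numbers game. A labelling fixed by every \(s \in S\) is exactly a vector in the radical of \(B\). Write \(\tau = (1+\sqrt{5})/2\), so that \(\cos^2(\pi/5) = (\tau+1)/4\). The tridiagonal determinant recursion then gives
\[
\det B_{Z_4} = \frac{9}{16} - \cos^2\frac{\pi}{5} = \frac{5-4\tau}{16} < 0,
\qquad
\det B_{Z_5} = \frac{5}{16} - \frac{1}{2}\cos^2\frac{\pi}{5} = \frac{3-2\tau}{16} < 0 .
\]
Both forms are therefore nondegenerate and indefinite, and there are no ``golden-ratio null vectors'' to find. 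More conceptually, a positive vector in the radical forces \(B\) to be positive semidefinite, so the graph would be affine. No affine graph carries a label \(5\), so this method can never exclude anything with an edge labelled \(5\).

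The repair is easy. The negative determinants above already show that \(Z_4\) and \(Z_5\) are not positive definite; equivalently, you can exhibit an explicit \(x\) with \(B(x,x) < 0\). Your monotonicity lemma then excludes every graph containing them.

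Alternatively, in the spirit of \cref{rem:new-proof-classification}, you can invoke \cref{prop:z4,prop:z5}. These produce roots of unbounded depth, so \(W_{Z_4}\) and \(W_{Z_5}\) are infinite, and by your analytic step their forms are not positive definite.

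With this replacement the rest goes through as you describe. In particular, the leading minors for \(H_3\) and \(H_4\) are \(1\), \((3-\tau)/4\), \((2-\tau)/4\), and \((5-3\tau)/16\), all positive.
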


\begin{table}[ht]
  \centering
    \caption{Coxeter graphs of the finite (first two columns) and affine (second two columns) Coxeter groups.
      In the finite cases, the number of nodes in the graph is equal to the subscript in the name.
      In the affine cases, the number of nodes in the graph is equal to one more than the subscript in the name.}\label{tab:finite-coxeter}    
    \begin{tabular}{c c c c}
      \toprule
      Name & Coxeter graph & Affine version & Affine Coxeter graph\\
      \midrule
      \(A_1\) & \includegraphics[valign=m, scale=0.7]{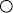} & \(\widetilde{A_1}\) & \includegraphics[valign=m, scale=0.7]{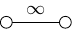}\\[1em]      
      \(A_n\) for \(n \geq 2\) & \includegraphics[valign=m, scale=0.7]{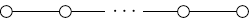} & \(\widetilde{A_n}\) & \includegraphics[valign=m, scale=0.7]{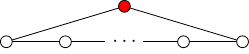}\\[1em]
      \(B_n\, (= C_n)\) for \(n \geq 2\) & \includegraphics[valign=m, scale=0.7]{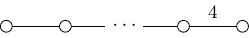} & \(\widetilde{B_n}\) & \includegraphics[valign=m, scale=0.7]{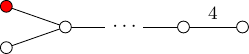}\\[1em]
      \(C_n\, (= B_n)\) for \(n \geq 2\) & \includegraphics[valign=m, scale=0.7]{bn.pdf} & \(\widetilde{C_n}\) & \includegraphics[valign=m, scale=0.7]{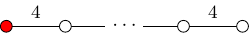}\\[1em]      
      \(D_n\) for \(n \geq 4\) & \includegraphics[valign=m, scale=0.7]{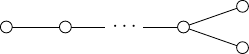} & \(\widetilde{D_n}\) & \includegraphics[valign=m, scale=0.7]{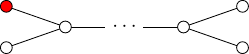}\\[1em]
      \(E_6\) & \includegraphics[valign=m, scale=0.7]{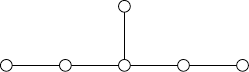} & \(\widetilde{E_6}\) & \includegraphics[valign=m, scale=0.7]{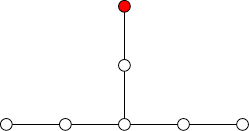}\\[2.5em]
      \(E_7\) & \includegraphics[valign=m, scale=0.7]{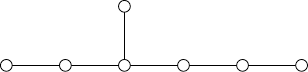} & \(\widetilde{E_7}\) & \includegraphics[valign=m, scale=0.7]{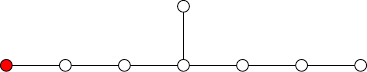}\\[1em]
      \(E_8\) & \includegraphics[valign=m, scale=0.7]{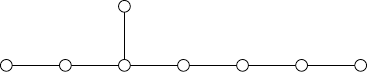} & \(\widetilde{E_8}\) & \includegraphics[valign=m, scale=0.7]{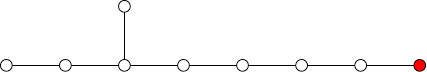}\\[1.5em]
      \(F_4\) & \includegraphics[valign=m, scale=0.7]{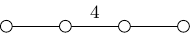} & \(\widetilde{F_4}\) & \includegraphics[valign=m, scale=0.7]{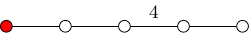}\\[1em]
      \(G_2\) & \includegraphics[valign=m, scale=0.7]{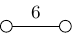} & \(\widetilde{G_2}\) & \includegraphics[valign=m, scale=0.7]{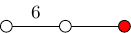}\\[1em]
      \(H_2\) & \includegraphics[valign=m, scale=0.7]{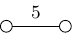}\\[1em]
      \(H_3\) & \includegraphics[valign=m, scale=0.7]{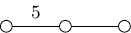}\\[1em]
      \(H_4\) & \includegraphics[valign=m, scale=0.7]{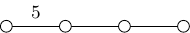}\\[1em]
      \(I_2(n)\) for \(n > 6\) & \includegraphics[valign=m, scale=0.7]{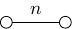}\\
      \bottomrule
    \end{tabular}
  \end{table}

  The first eight cases in~\cref{tab:finite-coxeter} are called \defn{crystallographic}.
  There is an important related notion: the \defn{affine Coxeter graphs} are obtained by attaching one extra ``affine node'' to each of the crystallographic Coxeter graphs.
  The position of the affine node is determined in each case.
  A Coxeter graph (or its associated Coxeter system) is called~\defn{affine} if \(\Gamma\) is one of the types listed in the second two columns of~\cref{tab:finite-coxeter}.
  The affine nodes are coloured red for emphasis.
  
\section{Regular languages and the Myhill--Nerode theorem}
In this section we recall the definition of regular languages, as well as a criterion for detecting a regular language that we will crucially use for the proof of~\cref{thm:all-coxeter-groups-main}.
\begin{definition}
  Let \(\Sigma\) be a finite alphabet.
  We say that a language \(L \subset \Sigma^{\ast}\) is \defn{regular} if there exists a deterministic finite state automaton (or DFA) \(M\) such that the language of words accepted by \(M\) is exactly \(L\).
\end{definition}
Recall again that a language is called regular if it is precisely the set of words accepted by some finite state automaton.
We do not review the definition and properties of finite state automata because we do not need anything further about them.
We refer the reader to e.g.~\cite[Chapter 1]{sip:06} for these details.

There is an explicit and elegant necessary and sufficient criterion to check whether a given language is regular.
Moreover, in case the language is regular, this criterion outputs a minimal deterministic finite automaton that recognises it.
The criterion is called the \emph{Myhill--Nerode theorem}, which we write below as~\Cref{thm:myhill-nerode}.

We first need the definition of a certain equivalence relation on words, which depends on a given language.
We call this the Myhill--Nerode equivalence relation.
\begin{definition}
  Let \(\Sigma\) be a finite alphabet, and let \(L \subset \Sigma^{\ast}\) be any language.
  For \(x, y \in \Sigma^{\ast}\), we say that \(x \sim_L y\) if for every string \(w \in \Sigma^{\ast}\), the string \(xw\) is in \(L\) if and only if the string \(yw\) is also in \(L\).
\end{definition}

We can now state the criterion, which is a fundamental result in the theory of regular languages.
See, e.g.~\cite[Problem 1.52]{sip:06}.
\begin{theorem}[Myhill--Nerode theorem]\label{thm:myhill-nerode}
  Let \(\Sigma^{\ast}\) be an alphabet, and let \(L \subset \Sigma^{\ast}\) be a language.
  Then \(L\) is regular if and only if the equivalence relation \(\sim_L\) has finitely many equivalence classes on \(\Sigma^{\ast}\).
  If \(L\) is regular, then there is a DFA \(M\) whose states are indexed by the equivalence classes of \(\sim_L\), which recognises \(M\).
  Furthermore, this is a DFA that recognises \(M\) that has the fewest possible states.
\end{theorem}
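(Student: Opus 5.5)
The statement just above is the Myhill--Nerode theorem, a standard result; I will prove both directions directly from the definition of \(\sim_L\).

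First suppose \(L\) is recognised by a DFA \(M\) with state set \(Q\), start state \(q_0\) and transition function \(\delta\). Extend \(\delta\) to words in the usual way, so that \(\delta(q_0,x)\) is the state reached after reading \(x\). If \(\delta(q_0,x)=\delta(q_0,y)\), then for every \(w\) we have \(\delta(q_0,xw)=\delta(\delta(q_0,x),w)=\delta(q_0,yw)\). Hence \(xw\in L\) if and only if \(yw\in L\), that is, \(x\sim_L y\). So the map \(x\mapsto\delta(q_0,x)\) is constant on classes in a way that makes \(\sim_L\) coarser than the finite partition it induces. In particular \(\sim_L\) has at most \(|Q|\) classes, and this already gives the minimality claim: every DFA recognising \(L\) has at least as many states as there are \(\sim_L\)-classes.

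Conversely, suppose \(\sim_L\) has finitely many classes, and write \([x]\) for the class of \(x\). Define a DFA with states \(\{[x]\}\), start state \([\varepsilon]\), accepting states \(\{[x] : x\in L\}\), and transitions \(\delta([x],a)=[xa]\) for \(a\in\Sigma\). The key check is well-definedness, and this is the only step needing any care.

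\begin{itemize}
\item Transitions: if \(x\sim_L y\), then for every \(w\) we may test with \(aw\), which gives \(xa\sim_L ya\).
\item Accepting states: testing with \(w=\varepsilon\) shows that \(x\in L\) if and only if \(y\in L\), so membership in the accepting set depends only on the class.
\end{itemize}

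By induction on the length of \(x\), the state reached after reading \(x\) is \([x]\). Therefore \(x\) is accepted exactly when \(x\in L\), so this automaton recognises \(L\), and its states are indexed by the \(\sim_L\)-classes as claimed. Combined with the lower bound from the first direction, it has the fewest possible states.
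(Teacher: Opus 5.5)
Your proof is correct and is the standard argument. The paper proves nothing here; it simply cites Sipser (Problem 1.52), and your construction is exactly the intended solution: the fibres of $x\mapsto\delta(q_0,x)$ refine $\sim_L$, and the quotient automaton on $\sim_L$-classes recognises $L$.

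One small wording fix in the first direction. The map $x\mapsto\delta(q_0,x)$ need not be constant on $\sim_L$-classes, since a non-minimal DFA can send equivalent words to different states. What you actually proved, and all the count needs, is that $\sim_L$ is constant on the fibres of that map.
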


\section{Palindromic reduced words in certain Coxeter groups}
\label{sec:bad-subgraphs}
Recall from~\cref{sec:coxeter-basics} the classification of finite and affine Coxeter groups.
Recall that \(\pal(W)\) denotes the language of palindromic reduced words for a Coxeter system \((W,S)\).
We omit \(S\) from the notation for brevity, and when there is no ambiguity, we simply write \(\pal\).
In this section we show that \(\pal(W)\) is not regular for a particular set of Coxeter groups \(W\).
In the next section, we use the results of this section to prove the main theorem.

Before we prove irregularity, we prove a regularity result.
\begin{proposition}\label{prop:affine-a1}
  \(\pal\) is regular for the affine Coxeter system of type \(\widetilde{A_1}\).
\end{proposition}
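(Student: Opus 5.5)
The plan is to describe \(\pal\) explicitly for \(\widetilde{A_1}\) and check that it is regular. Write \(S = \{s,t\}\) with \(m(s,t) = \infty\), so \(W\) is the infinite dihedral group. The only relations are \(s^2 = t^2 = 1\), so a word in \(\{s,t\}^{\ast}\) is reduced if and only if it contains no factor \(ss\) or \(tt\). Equivalently, the reduced words are exactly the alternating words. For this fact I would cite the standard normal form for the free product \(\mathbb{Z}/2 \ast \mathbb{Z}/2\). Alternatively, I would check it with the geometric representation of~\cref{prop:geom-rep-faithful}: there \((\alpha_s,\alpha_t) = -1\), and the dual numbers game shows that along an alternating word every root met is positive with depth increasing by one at each step (\cref{cor:numbers-game-depth}). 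Combined with~\cref{prop:depth-of-reflection}, this gives that alternating words are reduced.

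Next I would determine which alternating words are palindromes. An alternating word of length \(n\) is determined by its first letter. Its last letter equals its first letter exactly when \(n\) is odd, and reversing an alternating word gives the alternating word that starts with its last letter. Hence an alternating word is a palindrome if and only if its length is odd; the empty word also counts as a palindrome if we allow it. So
\[\pal = \{ (st)^k s \mid k \geq 0\} \cup \{ (ts)^k t \mid k \geq 0\},\]
possibly together with the empty word, according to the convention adopted.

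Finally I would show this language is regular. The cleanest route is to observe that it is given by the regular expression \((st)^{\ast}s \cup (ts)^{\ast}t\). Alternatively, one can write down a DFA directly: a start state, two states recording the last letter read, a parity bit, and a dead state. One can also apply~\cref{thm:myhill-nerode}, noting that the equivalence classes are determined by (first letter, last letter, whether the word is reduced), which gives finitely many classes.

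There is no real obstacle here. The only point needing care is the justification that reduced words are exactly the alternating words, which is standard.
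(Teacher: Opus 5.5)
Your proposal is correct and follows essentially the paper's argument: reduced words in type \(\widetilde{A_1}\) are exactly the alternating words, the palindromic ones are exactly those of odd length, and the resulting language \((st)^{\ast}s \cup (ts)^{\ast}t\) (possibly with the empty word) is regular. Your final step is in fact more careful than the paper's: the automaton drawn there makes both \(q_1\) and \(q_2\) accepting, with a \(t\)-transition from \(q_1\) to \(q_2\) and an \(s\)-transition back, so it ignores parity and wrongly accepts non-palindromes such as \(st\), whereas your regular expression (or your DFA with a parity bit) recognises exactly \(\pal\).
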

\begin{proof}[Proof of~\cref{prop:affine-a1}]
  Let \(W\) be the Coxeter group of type \(\widetilde{A_1}\), which is the infinite dihedral group.
  Let \(s\) and \(t\) be the two Coxeter generators.
  In this case, an word \(\mathbf{w}\) is reduced if and only if the letters \(s\) and \(t\) alternate in it.
  Furthermore, \(\mathbf{w}\) is palindromic if and only if it begins and ends with the same letter.
  Thus a reduced word \(\mathbf{w}\) is palindromic if and only if it has an odd number of letters.
  The following deterministic finite automaton recognises \(\pal\) for the Coxeter group of type \(\widetilde{A_1}\).
  The states \(q_1\) and \(q_2\) are the accepting states for the automaton.
  \begin{center}
    \begin{tikzpicture}
      \node[state, initial] (q0) {\(q_0\)};
      \node[state, accepting] (q1) [above right=0.5cm and 2cm of q0] {\(q_1\)};
      \node[state, accepting] (q2) [below right=0.5cm and 2cm of q0] {\(q_2\)};
      \node[state] (q3) [below right=0.5cm and 2cm of q1] {\(q_3\)};
      \path[->]
      (q0) edge node[above left] {\(s\)} (q1)
      (q0) edge node[below left] {\(t\)} (q2)
      (q1) edge node[above right] {\(s\)} (q3)
      (q2) edge node[below right] {\(t\)} (q3)
      (q1) edge[bend left] node[right] {\(t\)} (q2)
      (q2) edge[bend left] node[left] {\(s\)} (q1)      
      ;
    \end{tikzpicture}
  \end{center}
\end{proof}

The remainder of this section is devoted to proving irregularity results for all affine types with at least two vertices, as well as for the following three graphs.
\begin{equation}\label{eq:y3-z4-z5}
  \begin{split}
    Y_3 & = 
          \begin{tikzpicture}[scale=0.5]
            {
              \pgftransformxshift{2 cm}
              \draw (2 cm, 0 cm) edge node[above, font=\scriptsize]{\(\infty\)} +(2 cm,0);
              \draw (4.0 cm,0) -- +(2 cm,0);
              \draw[fill=white] (2 cm, 0 cm) circle (0.2cm);
              \draw[fill=white] (4 cm, 0 cm) circle (0.2cm);
              \draw[fill=white] (6 cm, 0 cm) circle (0.2cm);
            }
          \end{tikzpicture}\\
    Z_4 &=   \begin{tikzpicture}[scale=0.5]
      {
        \pgftransformxshift{2 cm}
        \draw (0 cm,0) -- (2 cm,0);
        \draw (2 cm, 0 cm) edge node[above, font=\scriptsize]{\(5\)} +(2 cm,0);
        \draw (4.0 cm,0) -- +(2 cm,0);
        \draw[fill=white] (0 cm, 0 cm) circle (0.2cm);
        \draw[fill=white] (2 cm, 0 cm) circle (0.2cm);
        \draw[fill=white] (4 cm, 0 cm) circle (0.2cm);
        \draw[fill=white] (6 cm, 0 cm) circle (0.2cm);
      }
    \end{tikzpicture}\\
    Z_5 &=   \begin{tikzpicture}[scale=0.5]
      \draw (0 cm,0) -- (2 cm,0);
      {
        \pgftransformxshift{2 cm}
        \draw (0 cm,0) -- (2 cm,0);
        \draw (2 cm, 0 cm) edge +(2 cm,0);
        \draw (4.0 cm,0) edge node [above, font=\scriptsize]{\(5\)} +(2 cm,0);
        \draw[fill=white] (0 cm, 0 cm) circle (0.2cm);
        \draw[fill=white] (2 cm, 0 cm) circle (0.2cm);
        \draw[fill=white] (4 cm, 0 cm) circle (0.2cm);
        \draw[fill=white] (6 cm, 0 cm) circle (0.2cm);
      }
      \draw[fill=white] (0 cm, 0 cm) circle (0.2cm);
    \end{tikzpicture}
  \end{split}
\end{equation}
While we prove the statements separately, all proofs follow the same schema.
The next lemma encapsulates the common proof schema.
Recall that \(|\mathbf{w}|\) is the number of letters in a word \(\mathbf{w}\).
Recall also that \(|\mathbf{w}|\) is greater than or equal to the length \(\ell(w)\) of the group element represented by \(\mathbf{w}\).
\begin{lemma}\label{lem:common-proof-schema}
  Let \((W,S)\) be a Coxeter system and let \(\pal\) be the language of its palindromic reduced words.
  Let \(\mathbf{u}\) and \(\mathbf{v}\) be words in \((W,S)\) representing group elements \(u\) and \(v\) respectively, and let \(s \in S\).
  Suppose that the following conditions hold.
  \begin{enumerate}[label=(C\arabic*)]
  \item\label{it:c-depth} For every \(r \in \mathbb{N}\), the depth of the root \(u^r v (\alpha_s)\) is exactly \(r \cdot |\mathbf{u}| + |\mathbf{v}| + 1\).
  \item\label{it:c-min-suffix} Let \(\mathbf{w}_r = \mathbf{u}^r \mathbf{v}\, s\). If \(\mathbf{w}_r'\) is any word such that \(\mathbf{w}_r \mathbf{w}_r' \in \pal\), then \(|\mathbf{w}_r'| > r\).
  \end{enumerate}
  Then the set \(\{\mathbf{w}_r \mid r \geq 0\}\) contains representatives of infinitely many Myhill--Nerode equivalence classes for \(L\).
\end{lemma}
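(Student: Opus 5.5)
The plan is to attach to each word \(\mathbf{w}_r\) a numerical invariant of its Myhill--Nerode class (for \(L = \pal\)), and to show that this invariant is finite for every \(r\) but takes infinitely many different values. The natural choice is
\[
  f(r) = \min\{\, |\mathbf{w}'| \;:\; \mathbf{w}' \in S^{\ast},\ \mathbf{w}_r\mathbf{w}' \in \pal \,\},
\]
the length of a shortest completion of \(\mathbf{w}_r\) to a palindromic reduced word. If \(\mathbf{w}_r \sim_{\pal} \mathbf{w}_{r'}\), then by definition the two words have exactly the same set of completions \(\mathbf{w}'\), so \(f(r) = f(r')\) whenever both are defined. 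Hence it suffices to show that \(f(r)\) is finite for every \(r\) and unbounded as \(r \to \infty\).

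The first step, finiteness, uses~\ref{it:c-depth}. Set \(\mathbf{x}_r = \mathbf{u}^r\mathbf{v}\) and consider the palindromic word \(\mathbf{p}_r = \mathbf{x}_r\, s\, \overline{\mathbf{x}_r} = \mathbf{w}_r\overline{\mathbf{x}_r}\). This word represents \(x_r s x_r^{-1}\), which is the reflection \(s_\alpha\) in the root \(\alpha = u^r v(\alpha_s)\). I will first check that \(\alpha\) is positive. This follows because a root of depth \(d\) in the sense of \cref{def:depth} can be taken positive, or alternatively from the monotonicity along the word given by \cref{prop:depthchange}; if needed, I would replace \(\alpha\) by \(-\alpha\), which leaves \(s_\alpha\) unchanged.

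By~\ref{it:c-depth}, \(\depth(\alpha) = r|\mathbf{u}| + |\mathbf{v}| + 1\). Then \cref{prop:depth-of-reflection} gives
\[
  \ell(s_\alpha) = 2(r|\mathbf{u}| + |\mathbf{v}|) + 1 = |\mathbf{p}_r|.
\]
So \(\mathbf{p}_r\) is a reduced palindromic word, that is, \(\mathbf{p}_r \in \pal\). Consequently \(\overline{\mathbf{x}_r}\) is a valid completion of \(\mathbf{w}_r\), and
\[
  f(r) \leq r|\mathbf{u}| + |\mathbf{v}| < \infty.
\]

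The second step, unboundedness, is immediate from~\ref{it:c-min-suffix}: every completion of \(\mathbf{w}_r\) has length greater than \(r\), so \(f(r) > r\). Combining the two steps, \(r < f(r) < \infty\) for all \(r\). Thus \(f\) takes infinitely many distinct finite values on \(\{\mathbf{w}_r\}\), and the \(\mathbf{w}_r\) represent infinitely many \(\sim_{\pal}\)-classes.

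The only delicate point is the first step: identifying \(\mathbf{p}_r\) as a reduced word for the reflection in the root \(u^rv(\alpha_s)\), and handling the sign of that root so that \cref{prop:depth-of-reflection} applies. Everything else is formal.
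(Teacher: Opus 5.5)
Your argument is correct and is essentially the paper's. (C1) with \cref{prop:depth-of-reflection} makes \(\mathbf{w}_r\overline{\mathbf{v}}\,\overline{\mathbf{u}}^{r}\) a palindromic reduced word, and (C2) forces every completion of \(\mathbf{w}_r\) to have more than \(r\) letters. Your shortest-completion-length invariant \(f\) is just a tidier packaging of the paper's sparse subsequence \(r_j > c\,r_i\), and it needs only that a completion exists, not the linear bound \(ra+b\le rc\).

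Drop your handling of the sign, though. Neither justification you give shows that \(u^rv(\alpha_s)\) is positive. Passing to \(-\alpha\) does not rescue (C1), because \cref{def:depth} gives \(\depth(-\beta)=\depth(\beta)+1\) for positive \(\beta\). In fact, in \(\widetilde{A_1}\) the data \(\mathbf{u}=\mathbf{v}=ts\) satisfy (C1): the roots \((ts)^{r+1}(\alpha_s)\) are negative of depth \(2r+3\), with depth rising by one at every letter. They satisfy (C2) vacuously, yet all \(\mathbf{w}_r=(ts)^{r+1}s\) are Myhill--Nerode equivalent. So positivity of \(u^rv(\alpha_s)\) has to be read as part of (C1). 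The paper assumes this tacitly, and it holds in all its applications.
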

\begin{proof}
  Suppose that we have words \(\mathbf{u},\mathbf{v}\) as well as \(s \in S\) satisfying the conditions~\ref{it:c-depth} and~\ref{it:c-min-suffix} of the lemma.
  Let \(\mathbf{w}_r = \mathbf{u}^r\mathbf{v} s\).
  Let \(a = |\mathbf{u}|\) and \(b = |\mathbf{v}|\).
  By~\ref{it:c-depth} and~\cref{prop:depth-of-reflection}, we note that for any \(r \in \mathbb{N}\), the length of the element \(w_r v^{-1}u^{-r}\) is exactly \(2ra + 2b + 1\).
  Since this is exactly equal to the number of letters in the word \(\mathbf{u}^r\mathbf{v} s \overline{\mathbf{v}}\,\overline{\mathbf{u}}^{r}\), this is a reduced word.
  It is also clearly palindromic, and thus \(\mathbf{w}_r\overline{\mathbf{v}}\,\overline{\mathbf{u}}^{r} \in L\).
  In particular, \(\mathbf{w}_r' = \overline{\mathbf{v}}\,\overline{\mathbf{u}}^r\) is a word with \(ra + b\) letters such that \(\mathbf{w}_r\mathbf{w}_r' \in \pal\).

  Now fix some \(c \in \mathbb{N}\) such that \(ra + b \leq rc\) for each \(r \geq 1\).
  Choose any infinite subsquence \((\mathbf{w}_{r_j})\) of \((\mathbf{w}_j)_{j \ge 1}\) such that \(r_j > cr_i\) for \(i < j\).
  Consider any pair \(i < j\).
  Then as before, there is a word \(\mathbf{w}_{r_i}'\) with \(r_ia + b\) letters such that \(\mathbf{w}_{r_i}\mathbf{w}_{r_i}' \in L\).
  However, \(r_ia + b < r_ic < r_j\) by design.
  Thus by~\ref{it:c-min-suffix}, we know that \(\mathbf{w}_{r_j}\mathbf{w}_{r_i}' \notin L\).
  Thus \(\mathbf{w}_{r_i} \not \sim_L \mathbf{w}_{r_j}\).
  This argument shows that the words \((\mathbf{w}_{r_i})\) are pairwise inequivalent under the Myhill--Nerode equivalence relation for \(L\).
\end{proof}

  \begin{table}[h]
  \centering
  \caption{Imaginary roots \(\delta\) as well as words \(\mathbf{u}\) and simple reflections \(s\) for each affine type other than \(\widetilde{A_1}\) satisfying the conditions of~\cref{lem:common-proof-schema}.
  In \(\mathbf{u}\), the tuple \((i_1,\ldots,i_r)\) denotes the word \(s_{i_1}\cdots s_{i_r}\).}\label{tab:units}
  \begin{tabular}{lccc}
    Type & Labelled Coxeter graph with \(\delta\) inset & \(u\) & \(s\)\\
    \toprule
    \(\widetilde{A_n}\):
         & \includegraphics[valign=m]{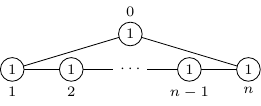}
                                             & \((0,n,n-1,\ldots, 2,1)^n\) & \(s_0\)\\ 
    \midrule

    \(\widetilde{B_n}\):
         & \includegraphics[valign=m]{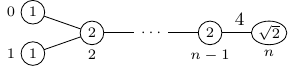}
                                             & \(\makecell{(2,3, \ldots ,{n-1},n,\\ {n-1},\ldots,3,1,0)}\) & \(s_2\)\\
    \midrule

    \(\widetilde{C_n}\):
           & \includegraphics[valign=m]{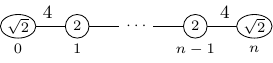}
         &\(\makecell{(1, 2, \ldots, {n-1}, n,\\ {n-1}, \ldots ,3,2,0)}\) & \(s_1\)
    \\
    \midrule

    \(\widetilde{D_n}\):
           &\includegraphics[valign=m]{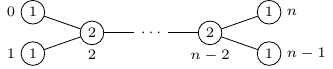}
    &\(\makecell{(2,3, \ldots ,{n-2},n,{n-1},\\{n-2} \ldots ,3,1,0)}\)& \(s_2\)

    \\
    \midrule

    \(\widetilde{E_6}\):
           &\includegraphics[valign=m]{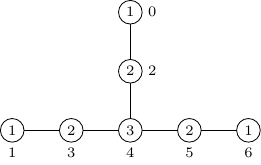}           
                                   &\(\makecell{(4,5,6,3,1,2,\\0,4,5,3,2)}\) & \(s_4\)

    \\
    \midrule

    \(\widetilde{E_7}\):
           &\includegraphics[valign=m]{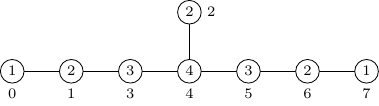}
                                   &\(\makecell{(4,5,6,7,3,1,0,2,\\4,5,6,3,1,4,2,5,3)}\) & \(s_4\)\\
    \midrule
    \(\widetilde{E_8}\):
           &\includegraphics[valign=m]{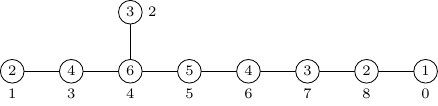}           
                                   & \(\makecell{(4,5,6,7,8,0,3,1,2,4,5,\\6,7,8,3,4,5,6,7,2,4,\\3,1,5,6,4,3,2,5)}\)& \(s_4\)\\
    \midrule
    \(\widetilde{F_4}\):
&\includegraphics[valign=m]{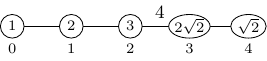}                           
                                   &\((2,3,4,1,0,2,1,3)\)&\(s_2\)\\

    \midrule
    \(\widetilde{G_2}\):&\includegraphics[valign=m]{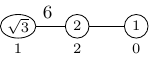}
    &\((2,1,0)\)&\(s_2\)\\
    \bottomrule

  \end{tabular}
\end{table}

Now we tackle the affine cases with at least two vertices, which are exactly the affine cases other than \(\widetilde{A_1}\).
\begin{proposition}\label{prop:affine-types}
  For any affine Coxeter system other than type \(\widetilde{A_1}\), the language \(\pal\) is not regular.
\end{proposition}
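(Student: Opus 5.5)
Fix an affine Coxeter system other than \(\widetilde{A_1}\), and let \(\mathbf{u}\) and \(s\) be the data listed in~\cref{tab:units}. We take \(\mathbf{v}\) to be the empty word (or a short prefix if one is needed). The plan is to verify conditions~\ref{it:c-depth} and~\ref{it:c-min-suffix} of~\cref{lem:common-proof-schema}. That lemma then gives infinitely many Myhill--Nerode classes for \(\pal\), and the Myhill--Nerode theorem (\cref{thm:myhill-nerode}) gives irregularity.

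\textbf{Condition~\ref{it:c-depth}.} The key structural fact is that the roots of an affine system lie in \(\Phi_0 + \mathbb{Z}\delta\), where \(\delta\) is the imaginary root inset in~\cref{tab:units}. We would run the dual numbers game on \(\alpha_s\) along the word \(\mathbf{u}\), read right to left as it acts. The first check is that every single letter strictly increases the relevant coefficient. By~\cref{cor:numbers-game-depth} each letter then raises the depth by exactly one. The second check is that after one full pass,
\[
u(\alpha_s) = \alpha_s + \delta .
\]
Since \(\delta\) is \(W\)-invariant, \(u^r(\alpha_s) = \alpha_s + r\delta\). During the \((r+1)\)-st pass the labels are those of the first pass shifted by \(r\delta\). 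Because \((\delta,\alpha_t) = 0\) for all \(t\), each move compares the same pairings \((\beta,\alpha_t)\) as in the first pass. So every letter again increases depth, and the depth of \(u^r(\alpha_s)\) is \(r|\mathbf{u}|+1\), as required.

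The table entries (with powers such as \((\cdots)^n\) in type \(\widetilde{A_n}\)) are presumably chosen exactly so that one pass adds \(\delta\). This becomes a finite calculation for the exceptional types and a uniform inductive calculation along the chain for types \(\widetilde{A_n}\), \(\widetilde{B_n}\), \(\widetilde{C_n}\), \(\widetilde{D_n}\).

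\textbf{Condition~\ref{it:c-min-suffix}, the main obstacle.} Suppose \(\mathbf{w}_r\mathbf{w}'\) is a reduced palindrome with \(|\mathbf{w}'| \le r\). Its length is at most \(r|\mathbf{u}|+1+r\), which is less than twice \(|\mathbf{w}_r|\) once \(|\mathbf{u}| \ge 2\). Hence the palindrome's centre lies strictly inside \(\mathbf{w}_r\), so \(\mathbf{w}'\) is the reverse of a prefix of \(\mathbf{w}_r\). Concretely, \(\mathbf{w}_r\) ends with a segment of the form \(\mathbf{x}\,c\,\overline{\mathbf{x}}\), where \(c\) is a single letter and \(|\overline{\mathbf{x}}|\) is large relative to the length of \(\mathbf{w}'\).

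Since \(\mathbf{w}_r\) is a suffix of a periodic word with period \(\mathbf{u}\), it suffices to show that the infinite periodic word \(\cdots\mathbf{u}\mathbf{u}\,s\) contains no palindromic factor centred within distance roughly \(|\mathbf{w}'|\) of the end whose arms reach far back. Any such long palindromic factor would force \(\mathbf{u}\) (up to cyclic rotation) to be palindromic, or to be a product of two palindromes compatible with the centre position. This is a finite combinatorial check on each word in the table.

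Alternatively, and more robustly, one can argue through length. A palindrome centred at position \(p\) inside \(\mathbf{w}_r\) would contain a long palindromic factor of \(\mathbf{u}^r\), that is, of the form \(\mathbf{y}\,t\,\overline{\mathbf{y}}\) representing a reflection of length \(2|\mathbf{y}|+1\). This reflection would have to equal a conjugate of a simple reflection by a suffix of \(\mathbf{u}^{r'}\). Its root would then be of the form \(\beta + k\delta\), and the coefficient growth computed in step one would contradict the claimed length. We would try the direct combinatorial check first, since the explicit words in~\cref{tab:units} make it mechanical.
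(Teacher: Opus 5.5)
Your proposal follows the paper's proof. You use the same table data with $\mathbf{v}$ empty, prove (C1) via the dual numbers game plus $W$-invariance of $\delta$ (in type $\widetilde{A_n}$ one pass gives $\alpha_0+n\delta$ rather than $\alpha_0+\delta$, which is harmless), and prove (C2) by cancelling down to a long palindromic suffix $\mathbf{x}\mathbf{u}^m s$ of $\mathbf{u}^r s$. Your reduction of (C2) to ``the cyclic class of $\mathbf{u}$ is not closed under reversal'' is a cleaner version of the paper's ``evident by inspection''. It holds for every table entry, because in each case $s_0$ is always preceded by one fixed letter and followed by a different fixed letter.
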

\begin{proof}[Proof of~\cref{prop:affine-types}]
  In~\cref{tab:units} we list the affine Coxeter systems with at least two vertices.
  In each case, we have labelled the vertices of the Coxeter graph from \(0\) to \(n\).
  We also draw inside each node a coefficient, such that the sum corresponds to a \defn{positive imaginary root} \(\delta\).
  For our purposes, \(\delta\) is a non-zero vector with positive coefficients in the basis of simple roots of \(V_W\) that is stable under the \(W\)-action.
  It is well-known that in affine type, the subspace of \(V_W\) that is pointwise preserved by the \(W\)-action is one-dimensional (this follows from, e.g.~\cite[Section 2.6]{hum:90}).
  Thus \(\delta\) is unique upto scaling.
  
  For each Coxeter graph, we also list a word \(\mathbf{u}\) and an \(s \in S\).
  For clarity, we write \(i\) in place of \(s_i\) in the word \(\mathbf{u}\).
  Finally, in case, we set \(\mathbf{v} = 1\).
  We claim that in each case, the words \(\mathbf{u}\), \(\mathbf{v} = 1\), together with \(s\) satisfy the two conditions of~\cref{lem:common-proof-schema}, thus proving the result.

  The first condition~\ref{it:c-depth} says that the depth of \(u^r v (\alpha_s)\) is exactly \(r \cdot |\mathbf{u}| + |\mathbf{v}| + 1\), which is equal to \(r \cdot |\mathbf{u}| + 1\) in our case.
  We use the dual numbers game and~\cref{cor:numbers-game-depth} to check that the depth increases by one after applying each letter of \(\mathbf{u}^r\) to \(\alpha_s\), which will prove~\ref{it:c-depth}.
  
  We will see in each case that \(u(\alpha) = \alpha + c \delta\) for some constant \(c\).
  Since \(s_i(\delta) = \delta\) for each \(\delta\), we have \(s_i(u(\alpha)) = s_i(\alpha) + c\delta\).
  Thus if the depth increases at each step while applying \(\mathbf{u}\) to \(\alpha\), the same continues to hold for subsequent applications of \(\mathbf{u}\).
  Thus we only need to check the depth increase for \(u(\alpha)\).

  The calculations are not difficult, but we sketch a selection for the convenience of the reader.
  For instance, for type \(\widetilde{A_n}\), we start with \(\alpha_0\) and apply the word \(\mathbf{u}\) in blocks as below.
  \begin{align*}
    \begin{tikzpicture}[scale=0.5, baseline=(current bounding box.center), font=\scriptsize]
      \draw (0 cm,0) -- (8 cm,0);
      \draw (0 cm,0) -- (4.0 cm, 1.2 cm);
      \draw (4.0 cm, 1.2 cm) -- (8 cm, 0);
      \draw[fill=white] (0 cm, 0 cm) circle (0.4cm);
      \draw[fill=white] (2 cm, 0 cm) circle (0.4cm);
      \node[rectangle, draw=none, fill=white] at (4cm,0cm) {\(\cdots\)};
      \draw[fill=white] (6 cm, 0 cm) circle (0.4cm);
      \draw[fill=white] (8 cm, 0 cm) circle (0.4cm);
      \draw[fill=white] (4.0 cm, 1.2 cm) circle (0.4cm) node {\(1\)};
    \end{tikzpicture}
    &\xmapsto{(n,n-1,\ldots, 2,1)}
    \begin{tikzpicture}[scale=0.5, baseline=(current bounding box.center), font=\scriptsize]
      \draw (0 cm,0) -- (8 cm,0);
      \draw (0 cm,0) -- (4.0 cm, 1.2 cm);
      \draw (4.0 cm, 1.2 cm) -- (8 cm, 0);
      \draw[fill=white] (0 cm, 0 cm) circle (0.4cm) node {\(1\)};
      \draw[fill=white] (2 cm, 0 cm) circle (0.4cm) node {\(1\)};
      \node[rectangle, draw=none, fill=white] at (4cm,0cm) {\(\cdots\)};
      \draw[fill=white] (6 cm, 0 cm) circle (0.4cm) node {\(1\)};
      \draw[fill=white] (8 cm, 0 cm) circle (0.4cm) node {\(2\)};
      \draw[fill=white] (4.0 cm, 1.2 cm) circle (0.4cm) node {\(1\)};
    \end{tikzpicture}\\
    &\xmapsto{(n-1,\ldots, 2,1,0)}
    \begin{tikzpicture}[scale=0.5, baseline=(current bounding box.center), font=\scriptsize]
      \draw (0 cm,0) -- (8 cm,0);
      \draw (0 cm,0) -- (4.0 cm, 1.2 cm);
      \draw (4.0 cm, 1.2 cm) -- (8 cm, 0);
      \draw[fill=white] (0 cm, 0 cm) circle (0.4cm) node {\(2\)};
      \draw[fill=white] (2 cm, 0 cm) circle (0.4cm) node {\(2\)};
      \node[rectangle, draw=none, fill=white] at (4cm,0cm) {\(\cdots\)};
      \draw[fill=white] (6 cm, 0 cm) circle (0.4cm) node {\(3\)};
      \draw[fill=white] (8 cm, 0 cm) circle (0.4cm) node {\(2\)};
      \draw[fill=white] (4.0 cm, 1.2 cm) circle (0.4cm) node {\(2\)};
    \end{tikzpicture}\\
    &\quad \vdots \\
    &\xmapsto{(0,n,n-1,\ldots, 2)}
    \begin{tikzpicture}[scale=0.5, baseline=(current bounding box.center), font=\scriptsize]
      \draw (0 cm,0) -- (8 cm,0);
      \draw (0 cm,0) -- (4.0 cm, 1.2 cm);
      \draw (4.0 cm, 1.2 cm) -- (8 cm, 0);
      \draw[fill=white] (0 cm, 0 cm) circle (0.4cm) node {\(n\)};
      \draw[fill=white] (2 cm, 0 cm) circle (0.4cm) node {\(n\)};
      \node[rectangle, draw=none, fill=white] at (4cm,0cm) {\(\cdots\)};
      \draw[fill=white] (6 cm, 0 cm) circle (0.4cm) node {\(n\)};
      \draw[fill=white] (8 cm, 0 cm) circle (0.4cm) node {\(n\)};
      \draw[fill=white] (4.0 cm, 1.2 cm) ellipse (0.6cm) node {\(n+1\)};
    \end{tikzpicture}
  \end{align*}
  Thus \(u(\alpha_0) = \alpha_0 + n \delta\) as desired.
  We briefly sketch similar calculations for the other infinite families \(BCD\).
  The following is \(\widetilde{B_n}\).
  \begin{align*}
\begin{tikzpicture}[scale=0.5, baseline=(current bounding box.center), font=\scriptsize]
  \draw (0,0.7 cm) -- (2 cm,0);
  \draw (0,-0.7 cm) -- (2 cm,0);
  \draw (2 cm,0) -- (6 cm,0);
  \draw (6 cm, 0 cm) edge node[above, font=\scriptsize]{\(4\)} +(2 cm,0);
  \draw[fill=white] (0 cm, 0.7 cm) circle (0.4cm);
  \draw[fill=white] (0 cm, -0.7 cm) circle (0.4cm);
  \draw[fill=white] (2 cm, 0 cm) circle (0.4cm) node {\(1\)};
  \node[rectangle, draw=none, fill=white] at (4cm,0cm) {\(\cdots\)};    
  \draw[fill=white] (6 cm, 0 cm) circle (0.4cm);
  \draw[fill=white] (8 cm, 0 cm) circle (0.4cm);
\end{tikzpicture}
    &\xmapsto{(n,n-1,\ldots,3)}
\begin{tikzpicture}[scale=0.5, baseline=(current bounding box.center), font=\scriptsize]
  \draw (0,0.7 cm) -- (2 cm,0);
  \draw (0,-0.7 cm) -- (2 cm,0);
  \draw (2 cm,0) -- (6 cm,0);
  \draw (6 cm, 0 cm) edge node[above, font=\scriptsize]{\(4\)} +(2 cm,0);
  \draw[fill=white] (0 cm, 0.7 cm) circle (0.4cm);
  \draw[fill=white] (0 cm, -0.7 cm) circle (0.4cm);
  \draw[fill=white] (2 cm, 0 cm) circle (0.4cm) node {\(1\)};
  \node[rectangle, draw=none, fill=white] at (4cm,0cm) {\(\cdots\)};    
  \draw[fill=white] (6 cm, 0 cm) circle (0.4cm) node {\(1\)};
  \draw[fill=white] (8 cm, 0 cm) circle (0.4cm) node {\(1\)};
\end{tikzpicture}\\
    & \xmapsto{(2,3,\ldots,n-1)}
\begin{tikzpicture}[scale=0.5, baseline=(current bounding box.center), font=\scriptsize]
  \draw (0,0.7 cm) -- (2 cm,0);
  \draw (0,-0.7 cm) -- (2 cm,0);
  \draw (2 cm,0) -- (6 cm,0);
  \draw (6 cm, 0 cm) edge node[above, font=\scriptsize]{\(4\)} +(2 cm,0);
  \draw[fill=white] (0 cm, 0.7 cm) circle (0.4cm) node {\(1\)};
  \draw[fill=white] (0 cm, -0.7 cm) circle (0.4cm) node {\(1\)};
  \draw[fill=white] (2 cm, 0 cm) circle (0.4cm) node {\(3\)};
  \node[rectangle, draw=none, fill=white] at (4cm,0cm) {\(\cdots\)};    
  \draw[fill=white] (6 cm, 0 cm) circle (0.4cm) node {\(2\)};
  \draw[fill=white] (8 cm, 0 cm) circle (0.4cm) node {\(2\)};
\end{tikzpicture}      
  \end{align*}
  In this case, \(u(\alpha_2) = \alpha_2 + \delta\).

  The following is \(\widetilde{C_n}\).
  \begin{align*}
    \begin{tikzpicture}[scale=0.5, baseline=(current bounding box.center), font=\scriptsize]
      \draw (0, 0 cm) edge node[above, font=\scriptsize]{\(4\)} +(2 cm,0);
      {
        \pgftransformxshift{2 cm}
        \draw (0 cm,0) -- (4 cm,0);
        \draw (4 cm, 0 cm) edge node[above, font=\scriptsize]{\(4\)} +(2 cm,0);
        \draw[fill=white] (0 cm, 0 cm) circle (0.4cm) node {\(1\)};
        \node[rectangle, draw=none, fill=white] at (2cm,0cm) {\(\cdots\)};    
        \draw[fill=white] (4 cm, 0 cm) circle (0.4cm);
        \draw[fill=white] (6 cm, 0 cm) circle (0.4cm);
      }
      \draw[fill=white] (0 cm, 0 cm) circle (0.4cm);
    \end{tikzpicture}
    &\xmapsto{(n,n-1,\ldots,3,2,0)}
    \begin{tikzpicture}[scale=0.5, baseline=(current bounding box.center), font=\scriptsize]
      \draw (0, 0 cm) edge node[above, font=\scriptsize]{\(4\)} +(2 cm,0);
      {
        \pgftransformxshift{2 cm}
        \draw (0 cm,0) -- (4 cm,0);
        \draw (4 cm, 0 cm) edge node[above, font=\scriptsize]{\(4\)} +(2 cm,0);
        \draw[fill=white] (0 cm, 0 cm) circle (0.4cm) node {\(1\)};
        \node[rectangle, draw=none, fill=white] at (2cm,0cm) {\(\cdots\)};    
        \draw[fill=white] (4 cm, 0 cm) circle (0.4cm) node {\(1\)};
        \draw[fill=white] (6 cm, 0 cm) ellipse (0.6cm and 0.4cm) node {\(\sqrt{2}\)};
      }
      \draw[fill=white] (0 cm, 0 cm) ellipse (0.6cm and 0.4cm) node {\(\sqrt{2}\)};
    \end{tikzpicture}\\
    &\xmapsto{(1,2,\ldots,n-1)}
    \begin{tikzpicture}[scale=0.5, baseline=(current bounding box.center), font=\scriptsize]
      \draw (0, 0 cm) edge node[above, font=\scriptsize]{\(4\)} +(2 cm,0);
      {
        \pgftransformxshift{2 cm}
        \draw (0 cm,0) -- (4 cm,0);
        \draw (4 cm, 0 cm) edge node[above, font=\scriptsize]{\(4\)} +(2 cm,0);
        \draw[fill=white] (0 cm, 0 cm) circle (0.4cm) node {\(3\)};
        \node[rectangle, draw=none, fill=white] at (2cm,0cm) {\(\cdots\)};    
        \draw[fill=white] (4 cm, 0 cm) circle (0.4cm) node {\(2\)};
        \draw[fill=white] (6 cm, 0 cm) ellipse (0.6cm and 0.4cm) node {\(\sqrt{2}\)};
      }
      \draw[fill=white] (0 cm, 0 cm) ellipse (0.6cm and 0.4cm) node {\(\sqrt{2}\)};
    \end{tikzpicture}
  \end{align*}
  In this case, \(u(\alpha_1) = \alpha_1 + \delta\).

  The following is \(\widetilde{D_n}\).
  \begin{align*}
    \begin{tikzpicture}[scale=0.5,baseline=(current bounding box.center), font=\scriptsize]
      \draw (0,0.7 cm) -- (2 cm,0);
      \draw (0,-0.7 cm) -- (2 cm,0);
      \draw (2 cm,0) -- (6 cm,0);
      \draw (6 cm,0) -- (8 cm,0.7 cm);
      \draw (6 cm,0) -- (8 cm,-0.7 cm);
      \draw[fill=white] (0 cm, 0.7 cm) circle (0.4cm) node {\(1\)};
      \draw[fill=white] (0 cm, -0.7 cm) circle (0.4cm);
      \draw[fill=white] (2 cm, 0 cm) circle (0.4cm);
      \node[rectangle, draw=none, fill=white] at (4cm,0cm) {\(\cdots\)};    
      \draw[fill=white] (6 cm, 0 cm) circle (0.4cm);
      \draw[fill=white] (8 cm, 0.7 cm) circle (0.4cm);
      \draw[fill=white] (8 cm, -0.7 cm) circle (0.4cm);
    \end{tikzpicture}
    &\xmapsto{(n,n-1,n-2, \ldots, 3,1,1)}
    \begin{tikzpicture}[scale=0.5,baseline=(current bounding box.center), font=\scriptsize]
      \draw (0,0.7 cm) -- (2 cm,0);
      \draw (0,-0.7 cm) -- (2 cm,0);
      \draw (2 cm,0) -- (6 cm,0);
      \draw (6 cm,0) -- (8 cm,0.7 cm);
      \draw (6 cm,0) -- (8 cm,-0.7 cm);
      \draw[fill=white] (0 cm, 0.7 cm) circle (0.4cm) node {\(1\)};
      \draw[fill=white] (0 cm, -0.7 cm) circle (0.4cm) node {\(1\)};
      \draw[fill=white] (2 cm, 0 cm) circle (0.4cm) node {\(1\)};
      \node[rectangle, draw=none, fill=white] at (4cm,0cm) {\(\cdots\)};    
      \draw[fill=white] (6 cm, 0 cm) circle (0.4cm) node {\(1\)};
      \draw[fill=white] (8 cm, 0.7 cm) circle (0.4cm) node {\(1\)};
      \draw[fill=white] (8 cm, -0.7 cm) circle (0.4cm) node {\(1\)};
    \end{tikzpicture}\\
    &\xmapsto{(2,3,\ldots,n-2)}
    \begin{tikzpicture}[scale=0.5, baseline=(current bounding box.center), font=\scriptsize]
      \draw (0,0.7 cm) -- (2 cm,0);
      \draw (0,-0.7 cm) -- (2 cm,0);
      \draw (2 cm,0) -- (6 cm,0);
      \draw (6 cm,0) -- (8 cm,0.7 cm);
      \draw (6 cm,0) -- (8 cm,-0.7 cm);
      \draw[fill=white] (0 cm, 0.7 cm) circle (0.4cm) node {\(1\)};
      \draw[fill=white] (0 cm, -0.7 cm) circle (0.4cm) node {\(1\)};
      \draw[fill=white] (2 cm, 0 cm) circle (0.4cm) node {\(3\)};
      \node[rectangle, draw=none, fill=white] at (4cm,0cm) {\(\cdots\)};    
      \draw[fill=white] (6 cm, 0 cm) circle (0.4cm) node {\(2\)};
      \draw[fill=white] (8 cm, 0.7 cm) circle (0.4cm) node {\(1\)};
      \draw[fill=white] (8 cm, -0.7 cm) circle (0.4cm) node {\(1\)};
    \end{tikzpicture}      
  \end{align*}
  In this case, \(u(\alpha_2) = \alpha_2 + \delta\).

  We leave the remaining (finite) calculations to the reader, observing in each case that we obtain \(u(\alpha_s) = \alpha_s + \delta\) of depth exactly equal to \(|\mathbf{u}| + 1\) as desired.
  Thus the first condition~\ref{it:c-depth} of~\cref{lem:common-proof-schema} is proved.

  Let us now prove~\ref{it:c-min-suffix} of~\cref{lem:common-proof-schema}.
  Let \(\mathbf{w}_r = \mathbf{u}^rs\) in each case.
  Suppose that there is some word \(\mathbf{w}_r'\) with \(|\mathbf{w}_r'| < r\), such that \(\mathbf{w}_r\mathbf{w}_r' \in \pal\).
  Then the word \(\mathbf{w}_r\) begins with the reverse of \(\mathbf{w}_r'\).
  We can then ``cancel'' the reverse of \(\mathbf{w}_r'\) and \(\mathbf{w}_r'\) from the front and back of \(\mathbf{w}_r\mathbf{w}_r'\) respectively.
  This leaves behind another palindromic reduced word, which necessarily has the form
  \(\mathbf{x} \mathbf{u}^m s\) for some \(m \geq 0\), where \(\mathbf{x}\) is a tail of \(\mathbf{u}\) with less than \(|\mathbf{u}|\) letters.
  In each case listed, it is evident by inspection that no such word can be palindromic.
  Thus the second condition~\ref{it:c-min-suffix} of~\cref{lem:common-proof-schema} is also proved.

  We conclude that \(L\) is not a regular language for any affine Coxeter graph with at least two vertices.
\end{proof}

Next we tackle the case of \(Y_3\) drawn in~\eqref{eq:y3-z4-z5}.
\begin{proposition}\label{prop:y3}
  The language of palindromic reduced words for the Coxeter graph \(Y_3\) is not regular.
\end{proposition}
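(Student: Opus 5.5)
We apply \cref{lem:common-proof-schema}, exactly as in the proof of \cref{prop:affine-types}. Label the nodes of \(Y_3\) as \(1,2,3\) from left to right, so that \(m(s_1,s_2) = \infty\), \(m(s_2,s_3) = 3\) and \(m(s_1,s_3) = 2\). By~\eqref{eq:bilinear}, \((\alpha_1,\alpha_2) = -1\), \((\alpha_2,\alpha_3) = -\tfrac12\) and \((\alpha_1,\alpha_3) = 0\). The moves of the dual numbers game on a labelling \((c_1,c_2,c_3)\) are therefore
\[c_1 \mapsto -c_1 + 2c_2,\qquad c_2 \mapsto -c_2 + 2c_1 + c_3,\qquad c_3 \mapsto -c_3 + c_2.\]
I would take \(s = s_3\), \(\mathbf{v} = 1\) (the empty word) and \(\mathbf{u} = s_1 s_2\). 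The idea is that the infinite dihedral pair \(\{s_1,s_2\}\) lets us push the depth of \(\alpha_3\) up forever, while the letter \(s_3\) only ever appears once, at the end of \(\mathbf{w}_r\).

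For~\ref{it:c-depth}, apply the letters of \(\mathbf{u}^r\) to \(\alpha_3\) from right to left, so \(s_2\) and \(s_1\) alternate, starting with \(s_2\). Throughout, \(c_3 = 1\) is never changed. I would prove by induction on \(k \geq 0\) that the labels \((c_1,c_2)\) run through
\[(0,0) \xmapsto{s_2} (0,1) \xmapsto{s_1} (2,1) \xmapsto{s_2} (2,4) \xmapsto{s_1} (6,4) \xmapsto{s_2} \cdots,\]
that is, \((k(k+1),k^2) \xmapsto{s_2} (k(k+1),(k+1)^2) \xmapsto{s_1} ((k+1)(k+2),(k+1)^2)\). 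Indeed,
\[-k^2 + 2k(k+1) + 1 = (k+1)^2 \quad\text{and}\quad -k(k+1) + 2(k+1)^2 = (k+1)(k+2).\]
Every move strictly increases the label at the node being acted upon. So by \cref{cor:numbers-game-depth} the depth goes up by exactly one with each letter, and the depth of \(u^r(\alpha_3)\) is \(2r + 1 = r|\mathbf{u}| + |\mathbf{v}| + 1\). Unlike the affine case there is no invariant \(\delta\) to reduce to a single period, which is why I would use this explicit closed form instead.

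For~\ref{it:c-min-suffix}, set \(\mathbf{w}_r = (s_1s_2)^r s_3\), which has \(2r+1\) letters. Suppose \(\mathbf{w}_r\mathbf{w}_r' \in \pal\) with \(k = |\mathbf{w}_r'| \leq r\). Then the first \(k\) letters of \(\mathbf{w}_r\) are the reverse of \(\mathbf{w}_r'\). Cancelling them from the front, and cancelling \(\mathbf{w}_r'\) from the back, leaves a palindrome consisting of the last \(2r+1-k \geq r+1\) letters of \(\mathbf{w}_r\). For \(r \geq 1\) this word has at least two letters, ends in \(s_3\), and begins with \(s_1\) or \(s_2\), since \(s_3\) occurs only once in \(\mathbf{w}_r\). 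Hence it is not a palindrome, a contradiction, and so \(|\mathbf{w}_r'| > r\).

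The case \(r = 0\) fails trivially, because \(\mathbf{w}_0 = s_3\) is itself a palindrome. This is harmless: the proof of \cref{lem:common-proof-schema} only uses \(r \geq 1\), so we apply the lemma to the sequence indexed by \(r \geq 1\). The lemma then produces infinitely many Myhill--Nerode classes, and \cref{thm:myhill-nerode} gives non-regularity. The only real obstacle is the depth verification: spotting the closed form \((k(k+1),k^2)\) and checking that every step of the numbers game is strictly increasing.
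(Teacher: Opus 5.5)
Your proposal is correct and follows the paper's proof essentially verbatim. It uses the same choice \(\mathbf{u} = s_1s_2\), \(\mathbf{v} = 1\), \(s = s_3\), the same closed form \((k(k+1),k^2,1)\) in the dual numbers game, and the same cancellation argument for~\ref{it:c-min-suffix}; it is in fact slightly more careful than the paper, since you rule out \(|\mathbf{w}_r'| \leq r\) rather than only \(< r\), and you explicitly exclude the degenerate case \(r = 0\).
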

\begin{proof}
  Let \(\pal\) be the language of palindromic reduced words for \(Y_3\).
  Label the nodes of \(Y_3\) as follows.
\begin{center}
  \begin{tikzpicture}[scale=0.5]
    {
      \pgftransformxshift{2 cm}
      \draw (2 cm, 0 cm) edge node[above, font=\scriptsize]{\(\infty\)} +(2 cm,0);
      \draw (4.0 cm,0) -- +(2 cm,0);
      \draw[fill=white] (2 cm, 0 cm) circle (0.2cm) node[below=4pt] {1};
      \draw[fill=white] (4 cm, 0 cm) circle (0.2cm) node[below=4pt] {2};
      \draw[fill=white] (6 cm, 0 cm) circle (0.2cm) node[below=4pt] {3};
    }
  \end{tikzpicture}
\end{center}
Set \(\mathbf{u} = s_1s_2\) and \(\mathbf{v} = 1\). Let \(s = s_3\).
We will check that \(\mathbf{u}, \mathbf{v}, s\) as above satisfy the two conditions~\ref{it:c-depth} and~\ref{it:c-min-suffix} of~\cref{lem:common-proof-schema}, thereby concluding the result of the proposition.

To check~\ref{it:c-depth}, we need to show that the depth of \(u^r(\alpha_s)\) is exactly \(2r + 1\).
Write an arbitrary element \(a \alpha_1 + b \alpha_2 + c \alpha_3\) of the root space as the vector \((a,b,c)\).
Then
\[
  s_2((a,b,c)) = (a, 2a+c - b, c) \text{ and } s_1((a,b,c)) = (2b - a, b , c).
\]
Thus we can show by an easy induction argument that \[u^r(\alpha_3) = u^r((0,0,1)) = (r(r+1), r^2, 1) \text{ and } s_2u^r(\alpha_3) = (r(r+1), (r+1)^2, 1).\]
In particular, the depth of \(u^r(\alpha_3)\) is precisely \(2r+1\) by~\cref{cor:numbers-game-depth}.
This argument proves~\ref{it:c-depth}.

To prove~\ref{it:c-min-suffix}, suppose for contradiction that there is some word \(\mathbf{w}_r'\) with \(|\mathbf{w}_r'| < r\), such that \(\mathbf{w}_r\mathbf{w}_r' \in \pal\).
Then we can ``cancel'' \(\mathbf{w}_r'\) as well as the reverse of \(\mathbf{w}_r'\) from the back and front of \(\mathbf{w}_r\mathbf{w}_r'\), leaving behind a palindromic reduced word.
This new word has the form \(\mathbf{x}\mathbf{u}^ms_3\), where \(\mathbf{x}\) is either empty or equal to \(s_2\).
No such word can be palindromic, which is a contradiction.
Thus~\ref{it:c-min-suffix} is proved as well.
\end{proof}
Next, we handle \(Z_4\), drawn in~\eqref{eq:y3-z4-z5}.
\begin{proposition}\label{prop:z4}
  The language of palindromic regular words for the Coxeter graph \(Z_4\) is not regular.
\end{proposition}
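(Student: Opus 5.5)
The plan is to apply \cref{lem:common-proof-schema} exactly as in \cref{prop:affine-types} and \cref{prop:y3}. Label the nodes of \(Z_4\) as \(1,2,3,4\) from left to right, so that \(m(s_1,s_2)=3\), \(m(s_2,s_3)=5\) and \(m(s_3,s_4)=3\). In the dual numbers game, write \(\varphi = 2\cos(\pi/5) = (1+\sqrt5)/2\). Then applying \(s_2\) replaces \(c_2\) by \(-c_2 + c_1 + \varphi c_3\), applying \(s_3\) replaces \(c_3\) by \(-c_3+\varphi c_2 + c_4\), and \(s_1,s_4\) act as in the simply-laced case.

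I would take \(\mathbf{u}\) to be a Coxeter word running through the whole graph, first trying \(\mathbf{u}=s_1s_2s_3s_4\), together with \(\mathbf{v}=1\) or a short \(\mathbf{v}\) used to get started. For \(s\), I would pick an end node, say \(s=s_4\) or \(s=s_1\), chosen so that the first step of \(\mathbf{u}\) already increases depth.

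For \ref{it:c-depth}, I would run the numbers game by hand for one or two applications of \(\mathbf{u}\), checking that every letter increases the label at its node. Then, as in the proof of \cref{prop:y3}, I would find an invariant set of inequalities on the coefficient vector \((c_1,c_2,c_3,c_4)\) that makes each of the four moves an increase. A typical such set is \(c_1 \le c_2\), \(c_4\le c_3\), together with suitable comparisons of \(c_2\) with \(\varphi c_3\). I would then show that one full application of \(u\) preserves these inequalities. Since \(Z_4\) is not affine, there is no invariant \(\delta\) to reduce this to a single period. Instead, the induction has to go through the linear map \(u\) restricted to the cone. I expect its matrix to have a Perron eigenvalue greater than \(1\) with a positive eigenvector, so the coefficients grow and the inequalities persist. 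By \cref{cor:numbers-game-depth}, this gives depth exactly \(r|\mathbf{u}|+|\mathbf{v}|+1\).

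Finding the invariant cone is the main obstacle. The irrational coefficient \(\varphi\) makes naive inequalities fail in the first few steps, and an initial \(\mathbf{v}\) (or a different ordering of \(\mathbf{u}\), for instance the bipartite word \(s_1s_3s_2s_4\)) may be needed to enter the cone. If \(s_1s_2s_3s_4\) fails, I would try the bipartite ordering next, since for bipartite Coxeter elements positivity of the iterates is governed cleanly by the Perron--Frobenius eigenvector of the graph's adjacency matrix.

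For \ref{it:c-min-suffix}, I would use the same cancellation argument as before. Suppose \(\mathbf{w}_r\mathbf{w}_r'\in\pal\) with \(|\mathbf{w}_r'|\le r\). Stripping \(\mathbf{w}_r'\) from the end and its reverse from the front leaves a palindromic reduced word of the form \(\mathbf{x}\mathbf{u}^m\mathbf{v}s\), where \(\mathbf{x}\) is a proper tail of \(\mathbf{u}\) and \(m\) is large. Since \(\mathbf{u}\) uses each generator once, the second letter of any such word is determined by its first letter. Comparing the first two letters with the last two (\(s\) and the end of \(\mathbf{v}\) or \(\mathbf{u}\)) then rules out palindromicity for every choice of \(\mathbf{x}\), provided \(s\) has been chosen appropriately, and this is what will fix \(s\). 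Finally, \cref{lem:common-proof-schema} and \cref{thm:myhill-nerode} give that \(\pal\) is not regular.
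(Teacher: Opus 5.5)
\textbf{The gap is condition \ref{it:c-depth}.} It carries essentially all of the proposition's content, and your proposal does not prove it. The step you plan to use fails for your first candidate.

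Take \(\mathbf{u}=s_1s_2s_3s_4\) and write \(\tau=\varphi\). The letters \(s_4,s_3,s_2,s_1\), applied in turn to \((c_1,c_2,c_3,c_4)\), all increase depth exactly when
\[c_3>2c_4,\qquad \tau c_2>c_3+c_4,\qquad c_1+(\tau-1)c_2>\tau c_4,\qquad \tau(c_2-c_4)>c_1 .\]
This cone is not \(u\)-stable, even on positive roots. The root \(\beta=s_3s_2(\alpha_3)=\tau\alpha_2+\tau\alpha_3\) satisfies all four inequalities. However, \(u(\beta)=\tau^2(\alpha_1+\alpha_2+\alpha_3)+\tau\alpha_4\) violates \(c_3>2c_4\), so the next \(s_4\) lowers the depth. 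Hence for \(\mathbf{v}=s_3s_2\) and \(s=s_3\), the depth formula in \ref{it:c-depth} holds for \(r=0,1\) and fails at \(r=2\), even though every coefficient keeps growing.

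This is why ``Perron eigenvalue \(>1\) with a positive eigenvector, so the coefficients grow and the inequalities persist'' does not follow. The inequalities compare coefficients with one another, and growth does not preserve them. A dominant eigenvector inside the cone only gives \(u^r\beta\in C\) for \(r\gg0\), whereas \ref{it:c-depth} needs every letter of every period from \(r=0\) on.

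Your choice of an end node for \(s\) also fails when \(\mathbf{v}=1\). The first letter applied is \(s_4\), which fixes \(\alpha_1\) and \(\alpha_2\) and negates \(\alpha_4\). The remaining choice \(s=s_3\) fails at the next letter, since \(s_3(\alpha_3+\alpha_4)=\alpha_4\).

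To close the gap along your route, you need a genuine invariance argument. One way:
\begin{enumerate}
\item Check that the dominant eigenvector \(v_1\) of \(u\) lies in the open cone above. For \(s_1s_2s_3s_4\), \(v_1\approx(2.52,3.87,2.85,1)\) with eigenvalue \(\approx1.85\), and the other eigenvalues have modulus \(\le1\).
\item Build a thin cone \(\{xv_1+y : \|y\|\le\epsilon x\}\) inside it, using a norm on the complementary \(u\)-invariant subspace in which \(u\) does not expand. This thin cone is \(u\)-stable.
\item Take \(\mathbf{v}\) to be a minimal word for a positive root lying in the thin cone. Such roots exist: \(u^R(\gamma)\) for \(R\gg0\) and any root \(\gamma\) with positive \(v_1\)-component.
\end{enumerate}
Alternatively, you can do explicit eigenvector estimates as in the proof of \cref{prop:z5}.

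The paper avoids all of this with a non-Coxeter word \(\mathbf{u}=s_1s_4s_2s_3s_2s_3s_2\), which preserves the plane of vectors \((a,b,b,a)\). With \(\mathbf{v}=s_4s_1s_2s_3\) and \(s=s_2\), one gets \(v(\alpha_2)=(\tau,\tau,\tau,\tau)\). For \(a>0\), the depth-increase conditions for the seven letters collapse to \(b<\tau^2a\). The map \((a,b)\mapsto((2\tau+1)a-b,(2\tau+2)a-b)\) visibly preserves this condition.

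Your \ref{it:c-min-suffix} argument is right in spirit, but comparing only two letters at each end depends on the last letter of \(\mathbf{v}\). The robust version is this: for large \(m\), the block \(\mathbf{u}^m\) overlaps its own mirror image, which would force the cyclic order \(1\to2\to3\to4\) to coincide with its reverse.
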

\begin{proof}
  Let \(\pal\) be the language of palindromic reduced words for \(Z_4\).
  Label the nodes of \(Z_4\) as follows.
  \begin{center}
    \begin{tikzpicture}[scale=0.5]
      {
        \pgftransformxshift{2 cm}
        \draw (0 cm,0) -- (2 cm,0);
        \draw (2 cm, 0 cm) edge node[above, font=\scriptsize]{\(5\)} +(2 cm,0);
        \draw (4.0 cm,0) -- +(2 cm,0);
        \draw[fill=white] (0 cm, 0 cm) circle (0.2cm) node[below=4pt] {\(1\)};
        \draw[fill=white] (2 cm, 0 cm) circle (0.2cm) node[below=4pt] {\(2\)};
        \draw[fill=white] (4 cm, 0 cm) circle (0.2cm) node[below=4pt] {\(3\)};
        \draw[fill=white] (6 cm, 0 cm) circle (0.2cm) node[below=4pt] {\(4\)};
      }
    \end{tikzpicture}
  \end{center}
  Let \(\tau = (1 + \sqrt{5})/2\) be the golden ratio.
  Recall that \(\tau\) satisfies the equation \(\tau^2 = \tau + 1\).
  Let \[\mathbf{u} = s_1s_4s_2s_3s_2s_3s_2, \quad \mathbf{v} = s_4s_1s_2s_3, \text{ and }s = s_2.\]
  For \(r \in \mathbb{N}\), set \(\mathbf{w}_r = \mathbf{u}^r\mathbf{v}s_2\).
  We show that \(\mathbf{u}, \mathbf{v}, s\) as above satisfy the two conditions~\ref{it:c-depth} and~\ref{it:c-min-suffix} of~\cref{lem:common-proof-schema}, thereby proving the proposition.
  
  To prove~\ref{it:c-depth}, we need to show that the depth of \(u^r v (\alpha_2)\) is \(7r + 5\).
  We use the dual numbers game again.
  First observe the following.
  \[\begin{tikzpicture}[scale=0.5, font=\scriptsize]
      {
        \pgftransformxshift{2 cm}
        \draw (0 cm,0) -- (2 cm,0);
        \draw (2 cm, 0 cm) edge node[above, font=\scriptsize]{\(5\)} +(2 cm,0);
        \draw (4.0 cm,0) -- +(2 cm,0);
        \draw[fill=white] (0 cm, 0 cm) circle (0.4cm) node {\(0\)};
        \draw[fill=white] (2 cm, 0 cm) circle (0.4cm) node {\(1\)};
        \draw[fill=white] (4 cm, 0 cm) circle (0.4cm) node {\(0\)};
        \draw[fill=white] (6 cm, 0 cm) circle (0.4cm) node {\(0\)};
      }
    \end{tikzpicture}
    \quad
    \xmapsto{v}
    \quad
    \begin{tikzpicture}[scale=0.5, font=\scriptsize]
      {
        \pgftransformxshift{2 cm}
        \draw (0 cm,0) -- (2 cm,0);
        \draw (2 cm, 0 cm) edge node[above, font=\scriptsize]{\(5\)} +(2 cm,0);
        \draw (4.0 cm,0) -- +(2 cm,0);
        \draw[fill=white] (0 cm, 0 cm) circle (0.4cm) node {\(\tau\)};
        \draw[fill=white] (2 cm, 0 cm) circle (0.4cm) node {\(\tau\)};
        \draw[fill=white] (4 cm, 0 cm) circle (0.4cm) node {\(\tau\)};
        \draw[fill=white] (6 cm, 0 cm) circle (0.4cm) node {\(\tau\)};
      }
    \end{tikzpicture}
  \]
  It is easy to check that \(u\) preserves the subspace of the root space consisting of vectors of the form \((a,b,b,a)\).
  Indeed, we examine the result of applying each letter of \(\mathbf{u}\) to a vector of this form.
  \begin{center}
    \begin{tabular}{l|llll}
      Letter & \(1\) & \(2\) & \(3\) & \(4\)\\
      \hline
             & \(a\) & \(b\) & \(b\) & \(a\)\\
      \(s_2\) & \(a\) & \textcolor{darkred}{\(a + b(\tau-1)\)} & \(b\) & \(a\)\\
      \(s_3\) & \(a\) & \(a + b(\tau-1)\) & \textcolor{darkred}{\(a(\tau + 1)\)} & \(a\)\\
      \(s_2\) & \(a\) & \textcolor{darkred}{\(a(2\tau+1) + b(1-\tau)\)} & \(a(\tau + 1)\) & \(a\)\\
      \(s_3\) & \(a\) & \(a(2\tau+1) + b(1-\tau)\) & \textcolor{darkred}{\(a(2\tau + 2) -b\)} & \(a\)\\
      \(s_2\) & \(a\) & \textcolor{darkred}{\(a(2\tau + 2) - b\)} & \(a(2\tau + 2) -b\) & \(a\)\\
      \(s_4\) & \(a\) & \(a(2\tau + 2) - b\) & \(a(2\tau + 2) -b\) & \textcolor{darkred}{\(a(2\tau + 1) -b\)}\\
      \(s_1\) & \textcolor{darkred}{\(a(2\tau + 1) -b\)} & \(a(2\tau + 2) - b\) & \(a(2\tau + 2) -b\) & \(a(2\tau + 1) -b\)\\
    \end{tabular}
  \end{center}
  The depth strictly increases after applying each letter of \(\mathbf{u}\) if and only if the coordinate that changes at each step is strictly greater than the original value.
  We thus obtain the following inequalities.
  \begin{align}
    \label{eq:cone-inequalities-z4}
    \begin{split}
      (a + b(\tau-1)) - b &> 0\\
      a(\tau + 1) - b &> 0\\
      (a(2\tau + 1) + b(1-\tau)) - (a + b (\tau-1)) &> 0\\
      (a(2\tau + 2) - b) - a(\tau+1) &> 0\\
      (a(2\tau + 2) - b) - (a(2\tau + 1) + b (1-\tau)) & > 0\\
      (a(2\tau + 1) - b) - a & > 0.
    \end{split}
  \end{align}
  To check whether the depth increases at each step, we obtain the following inqualities.

  Let \(C\) be the cone cut out by the inequalities~\eqref{eq:cone-inequalities-z4}.
  It is not hard to check that the initial vector \((\tau, \tau, \tau, \tau)\) lies in \(C\).
  Moreover, it is also easy to check that \(C\) is stable under the application of \(u\).
  Thus the depth of \(u^rv(\alpha_2) = 7r+5\), and~\ref{it:c-depth} is proved.

  To prove~\ref{it:c-min-suffix},
  suppose for contradiction that there is some word \(\mathbf{w}_r\) with \(|\mathbf{w}_r'| < r\), such that \(\mathbf{w}_r\mathbf{w}_r' \in \pal\).
  Then we can ``cancel'' \(\mathbf{w}_r'\) with the reverse of \(\mathbf{w}_r'\) from the front and back of \(\mathbf{w}_r\mathbf{w}_r'\) to yield the palindromic word
  \(\mathbf{x} \mathbf{u}^m \mathbf{v} s_2\)
  for some \(m < r\), where \(\mathbf{x}\) is a tail of \(\mathbf{u}\) of length at most \(6\).
  However, it is clear from the forms of \(\mathbf{u}\) and \(\mathbf{v}\) that no such word can be palindromic, and~\ref{it:c-min-suffix} is proved.
\end{proof}
Finally, we handle \(Z_5\), drawn in~\eqref{eq:y3-z4-z5}.
\begin{proposition}\label{prop:z5}
  The language of palindromic reduced expressions for the Coxeter graph \(Z_5\) is not regular. 
\end{proposition}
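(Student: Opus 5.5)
We will follow the same schema as for \(Y_3\) and \(Z_4\), namely exhibit words \(\mathbf{u}, \mathbf{v}\) and a simple reflection \(s\) satisfying \ref{it:c-depth} and \ref{it:c-min-suffix} of \cref{lem:common-proof-schema}. Label the nodes of \(Z_5\) as \(1,2,3,4,5\) from left to right, so that \(m(s_4,s_5) = 5\) and all other edges have label \(3\). Let \(\tau\) be the golden ratio, so that \(-2(\alpha_4,\alpha_5) = \tau\). In the dual numbers game, the move at a node then replaces the label by minus itself plus the sum of the neighbouring labels, except that the contribution across the edge \(\{4,5\}\) is weighted by \(\tau\).

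\emph{Choice of \(\mathbf{u}\).} My first choice will be a bipartite Coxeter element
\[
  \mathbf{u} = s_1 s_3 s_5 s_2 s_4,
\]
possibly replaced by a suitable power or a small modification if the checks below fail. This has two advantages. First, \(Z_5\) is a non-affine, infinite Coxeter graph (it is the first extension of \(H_4\)). So the bipartite Coxeter element should act on \(V_W\) with a real eigenvalue \(\lambda > 1\), and Perron--Frobenius theory applied to the matrix \(2I - 2(\alpha_i,\alpha_j)\) should give an eigenvector with strictly positive coordinates. Second, within each block \(\{1,3,5\}\) and \(\{2,4\}\) the letters commute, which simplifies both the depth bookkeeping and the palindrome check.

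\emph{Checking \ref{it:c-depth}.} As in the proof of \cref{prop:z4}, I will apply the letters of \(\mathbf{u}\) to a general vector \((a_1,\dots,a_5)\). By \cref{cor:numbers-game-depth}, requiring that the changed coordinate strictly increases at each of the five steps gives a system of linear inequalities. This system cuts out a cone \(C\). The key steps are:
\begin{enumerate}
\item show that \(u(C) \subseteq C\); I would do this by checking it on the extremal rays, or by showing that \(C\) contains the positive eigenvector and is stable;
\item choose a short word \(\mathbf{v}\) and a letter \(s\) such that \(\mathbf{v}\,s\) is a depth-increasing path from \(\alpha_s\), and such that \(v(\alpha_s) \in C\). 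I would try \(s = s_2\) or \(s = s_3\) with \(\mathbf{v}\) something like \(s_5 s_4 s_1 s_2 s_3\), found by running the numbers game until the labels resemble the Perron eigenvector.
\end{enumerate}
Then the depth of \(u^r v(\alpha_s)\) is \(5r + |\mathbf{v}| + 1\). I expect this step to be the main obstacle, because the \(\tau\)-arithmetic makes it unclear a priori that a \(u\)-stable cone containing \(v(\alpha_s)\) exists for such short \(\mathbf{u}\). If it fails, I would replace \(\mathbf{u}\) by a longer word that is symmetric under \(1 \leftrightarrow 5\)-type patterns, as was done for \(Z_4\), so that \(u\) preserves a low-dimensional subspace. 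The problem then reduces to a two-variable computation like \eqref{eq:cone-inequalities-z4}.

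\emph{Checking \ref{it:c-min-suffix}.} Suppose \(\mathbf{w}_r\mathbf{w}_r' \in \pal\) with \(|\mathbf{w}_r'| \le r\). Cancelling \(\mathbf{w}_r'\) against its reverse at the front leaves a palindromic reduced word \(\mathbf{x}\mathbf{u}^m\mathbf{v}s\), where \(\mathbf{x}\) is a proper tail of \(\mathbf{u}\). I will rule this out by direct inspection of the first and last few letters. Reversing a word ending in \(\mathbf{u}\,\mathbf{v}\,s\) produces a word starting with \(s\,\overline{\mathbf{v}}\,\overline{\mathbf{u}}\). This is incompatible with every prefix of the form \(\mathbf{x}\mathbf{u}\) for each of the \(|\mathbf{u}|\) possible tails \(\mathbf{x}\), once \(\mathbf{v}\) and \(s\) are chosen so that \(s\overline{\mathbf{v}}\) does not match such a prefix. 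I will also check the small cases \(m = 0\) separately. Combining the two conditions with \cref{lem:common-proof-schema} and \cref{thm:myhill-nerode} shows that \(\pal\) is not regular.
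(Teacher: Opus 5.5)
\paragraph{Gap in \ref{it:c-depth}.} The proof of \ref{it:c-depth} is missing, and the mechanism you propose for it cannot work. For $\mathbf{u}=s_1s_3s_5s_2s_4$, the label at each node is changed by exactly one letter of $\mathbf{u}$. So by \cref{cor:numbers-game-depth}, the five defining inequalities of $C$ say precisely that $u(\beta)-\beta$ has strictly positive coordinates, i.e.\ $C=(u-\mathrm{id})^{-1}(\mathbb{R}^5_{>0})$. Here $u-\mathrm{id}$ is invertible, because fixed vectors of a Coxeter element lie in the radical of the form, and the form of $Z_5$ is nondegenerate. Since $u$ commutes with $u-\mathrm{id}$, the inclusion $u(C)\subseteq C$ would force $u$ to map the open positive orthant into itself. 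But $u(\alpha_2)=-(\alpha_1+\alpha_2+\alpha_3)$. So for $\gamma=\alpha_2+\epsilon(\alpha_1+\cdots+\alpha_5)$ with $\epsilon>0$ small, the vector $\beta=(u-\mathrm{id})^{-1}\gamma$ lies in $C$ while $u(\beta)\notin C$. The same argument applies to any word using each generator exactly once: if $s_j$ is the first letter applied, then $u(\alpha_j)<0$. Hence neither checking extremal rays nor ``$C$ contains the Perron eigenvector and is stable'' is available. Membership of $v(\alpha_s)$ in $C$ only controls the first copy of $\mathbf{u}$. Your fallback also fails. For $Z_4$ the subspace $\{(a,b,b,a)\}$ is $u$-stable because $u$ is fixed by the diagram flip (using $s_1s_4=s_4s_1$ and the braid relation for $s_2,s_3$). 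By contrast, $Z_5$, a path with labels $3,3,3,5$, has no nontrivial automorphism, so there is no analogous invariant subspace containing roots.

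\paragraph{What is missing.} You need a way to control the whole orbit $u^rv(\alpha_s)$ without an invariant cone, and this is where the paper's work lies. The paper takes $\mathbf{u}=s_5s_4s_5s_4s_1s_2s_3$, $\mathbf{v}=s_4s_5s_4s_1s_2s_3s_5s_4s_5s_4s_1s_2$ and $s=s_3$; for this choice $C$ is also not $u$-stable. It diagonalizes $u$: there is one eigenvalue $\mu_1\approx 1.5472$ of modulus greater than $1$, and the rest have modulus at most $1$. It checks that the dominant eigenvector lies in $C$ and has positive coefficient in $v(\alpha_3)$. It then verifies \eqref{eq:combined-inequality} for each defining functional of $C$, which gives the inequalities for every $r\ge 0$, not just $r\gg 0$.

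Your bipartite choice could be completed in the same spirit. Since $(u-\mathrm{id})u^r=u^r(u-\mathrm{id})$, condition \ref{it:c-depth} amounts to $u^r\gamma$ having strictly positive coordinates for all $r\ge 0$, where $\gamma=u(\beta)-\beta$ and $\beta=v(\alpha_s)$. Let $\rho+\rho^{-1}\approx 2.02$ be the Perron eigenvalue of the weighted adjacency matrix, with Perron vector $x$, and write $x_{\{1,3,5\}}$, $x_{\{2,4\}}$ for its restrictions. Then $u$ has eigenvalues $\rho^{\pm 2}$, a conjugate pair on the unit circle, and $-1$. The $\rho^2$-eigenvector $x_{\{1,3,5\}}+\rho^{-1}x_{\{2,4\}}$ is strictly positive. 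To finish you would need three things:
\begin{itemize}
\item a root $\beta$ whose $\gamma$ has positive dominant coefficient;
\item an explicit $r_0$ beyond which the dominant term beats the bounded remainder;
\item $\beta$ replaced by $u^{r_0}\beta$, with $\mathbf{v}$ a minimal-length word for it.
\end{itemize}
None of this quantitative analysis, nor an explicit $\mathbf{v}$ and $s$, is in the proposal, so \ref{it:c-depth} is unproved.

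Your treatment of \ref{it:c-min-suffix} is fine in outline. For large $r$ it follows because $13524$ is not a cyclic rotation of its reverse $42531$.
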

\begin{proof}
  Let \(\pal\) be the language of palindromic regular expressions for \(Z_5\).
  Label the nodes of \(Z_5\) as follows.
  \begin{center}
    \begin{tikzpicture}[scale=0.5]
      \draw (0 cm,0) -- (2 cm,0);
      {
        \pgftransformxshift{2 cm}
        \draw (0 cm,0) -- (2 cm,0);
        \draw (2 cm, 0 cm) edge +(2 cm,0);
        \draw (4.0 cm,0) edge node [above, font=\scriptsize]{\(5\)} +(2 cm,0);
        \draw[fill=white] (0 cm, 0 cm) circle (0.2cm) node[below=4pt] {\(2\)};
        \draw[fill=white] (2 cm, 0 cm) circle (0.2cm) node[below=4pt] {\(3\)};
        \draw[fill=white] (4 cm, 0 cm) circle (0.2cm) node[below=4pt] {\(4\)};
        \draw[fill=white] (6 cm, 0 cm) circle (0.2cm)  node[below=4pt] {\(5\)};
      }
      \draw[fill=white] (0 cm, 0 cm) circle (0.2cm)  node[below=4pt] {\(1\)};
    \end{tikzpicture}
  \end{center}
  Recall that \(\tau = (1+ \sqrt{5})/2\) is the golden ratio.
  Let \[\mathbf{u} = s_5s_4s_5s_4s_1s_2s_3, \quad \mathbf{v} = s_4s_5s_4s_1s_2s_3s_5s_4s_5s_4s_1s_2, \text{ and }s = s_3.\]
  For \(r \in \mathbb{N}\), set \(\mathbf{w}_r = \mathbf{u}^r\mathbf{v}s_2\).
  We show that \(\mathbf{u}, \mathbf{v}, s\) as above satisfy the two conditions~\ref{it:c-depth} and~\ref{it:c-min-suffix} of~\cref{lem:common-proof-schema}, thereby proving the proposition.  

  We prove~\ref{it:c-depth} by checking that the depth of \(u^r v (\alpha_3)\) is exactly \(7r + 13\).
  Once again, we use the dual numbers game of~\cite[Section 4.6]{bjo.bre:05}.
  First observe the following by direct calculation.
  \begin{equation}\label{eq:v-alpha-3}
    \begin{tikzpicture}[scale=0.6, font=\scriptsize]
      \draw (0 cm,0) -- (2 cm,0);
      {
        \pgftransformxshift{2 cm}
        \draw (0 cm,0) -- (2 cm,0);
        \draw (2 cm, 0 cm) edge +(2 cm,0);
        \draw (4.0 cm,0) edge node [above, font=\scriptsize]{\(5\)} +(2 cm,0);
        \draw[fill=white] (0 cm, 0 cm) circle (0.4cm) node {\(0\)};
        \draw[fill=white] (2 cm, 0 cm) circle (0.4cm) node {\(1\)};
        \draw[fill=white] (4 cm, 0 cm) circle (0.4cm) node {\(0\)};
        \draw[fill=white] (6 cm, 0 cm) circle (0.4cm) node {\(0\)};
      }
      \draw[fill=white] (0 cm, 0 cm) circle (0.4cm) node {\(0\)};
    \end{tikzpicture}
    \quad
    \xmapsto{v}
    \quad
    \begin{tikzpicture}[scale=0.6, font=\scriptsize]
      \draw (0 cm,0) -- (2 cm,0);
      {
        \pgftransformxshift{2 cm}
        \draw (0 cm,0) -- (2 cm,0);
        \draw (2 cm, 0 cm) edge +(2 cm,0);
        \draw (4.0 cm,0) edge node [above, font=\scriptsize]{\(5\)} +(2 cm,0);
        \draw[fill=white] (0 cm, 0 cm) ellipse (0.8cm and 0.4cm) node {\(\tau + 1\)};
        \draw[fill=white] (2 cm, 0 cm) ellipse (0.8cm and 0.4cm) node {\(\tau + 1\)};
        \draw[fill=white] (4 cm, 0 cm) ellipse (0.8cm and 0.4cm) node {\(2\tau + 2\)};
        \draw[fill=white] (6 cm, 0 cm) ellipse (0.8cm and 0.4cm) node {\(2\tau + 1\)};
      }
      \draw[fill=white] (0 cm, 0 cm) circle (0.4cm) node {\(\tau\)};
    \end{tikzpicture}
  \end{equation}
  Consider an arbitrary element \((a,b,c,d,e)\) of the root space.
  Let us examine the application of \(\mathbf{u}\) letter by letter to this element, as follows.
  \begin{center}
    \begin{tabular}{c|c c c c c}
      \toprule
      Letter& 1 & 2 & 3 & 4 & 5\\
      \midrule
            & \(a\) & \(b\) & \(c\) & \(d\) & \(e\)\\
      \(s_3\) & \(a\) & \(b\) & \textcolor{darkred}{\(b+d-c\)} & \(d\) & \(e\)\\
      \(s_2\) & \(a\) & \textcolor{darkred}{\(a+d-c\)} & \(b+d-c\) & \(d\) & \(e\)\\
      \(s_1\) & \textcolor{darkred}{\(d-c\)} & \(a+d-c\) & \(b+d-c\) & \(d\) & \(e\)\\
      \(s_4\) & \(d-c\) & \(a+d-c\) & \(b+d-c\) & \textcolor{darkred}{\(e\tau + b - c\)} & \(e\)\\
      \(s_5\) & \(d-c\) & \(a+d-c\) & \(b+d-c\) & \(e\tau + b - c\) & \textcolor{darkred}{\((e+b-c)\tau\)}\\
      \(s_4\) & \(d-c\) & \(a+d-c\) & \(b+d-c\) & \textcolor{darkred}{\(e + d + (b-c)(1+\tau)\)} & \((e+b-c)\tau\)\\
      \(s_5\) & \(d-c\) & \(a+d-c\) & \(b+d-c\) & \(e + d + (b-c)(1+\tau)\) & \textcolor{darkred}{\(d\tau + (b-c)(1+\tau)\)}\\
      \bottomrule
    \end{tabular}
  \end{center}
  The depth strictly increases after applying each letter of \(\mathbf{u}\) if and only if the coordinate that changes at each step is strictly greater than the original value.
  We thus obtain the following inequalities.
  \begin{align}
    \label{eq:cone-inequalities}
    \begin{split}
(b + d - c) - c & > 0\\
(a + d - c) - b & > 0\\
(d - c) - a & > 0\\
(e\tau + b - c) - d & > 0\\
((e+ b - c)\tau) - e & > 0\\
(e + d + (b-c)(1+\tau)) - (e\tau + b - c) & > 0\\
      (d\tau + (b-c)(1+\tau))-((e+b-c)\tau) & > 0.
    \end{split}
  \end{align}
  Let \(C\) be the cone in the root space cut out by the inequalities above.
  By direct calculation, we can verify that
  \[v(\alpha_3) = (\tau, \tau+1, \tau+1, 2\tau + 2, 2\tau + 1)\]
lies in \(C\).
  We claim that \(u^rv(\alpha_3) \in C\), which will imply~\ref{it:c-depth}.
  Unfortunately, \(C\) itself is not \(u\)-stable, so this claim is not obvious.
  Thus we resort to an indirect and somewhat tedious argument.

  We can write the matrix of the linear transformation \(u\) on the root space as follows.
  \[u =
    \begin{pmatrix}
      0 & 0 & -1 & 1 & 0\\
      1 & 0 & -1 & 1 & 0\\
      0 & 1 & -1 & 1 & 0\\
      0 & 1 + \tau & -1-\tau & 1 & 1\\
      0 & 1+\tau & -1-\tau & \tau & 0
    \end{pmatrix}.
  \]

  This matrix has characteristic polynomial \(x^5 - \tau x^3 - \tau x^2 + 1\), which factors into five distinct factors over \(\mathbb{C}\).
  It has roots
  \[\frac{\lambda \pm \sqrt{\lambda^2 -4}}{2} \text { and } -1, \text{ where } \lambda = \frac{1 \pm \sqrt{4\tau + 5}}{2}.\]
  Thus the root space has a basis of eigenvectors.

  For \(\lambda = (1 + \sqrt{4\tau + 5})/2\), the corresponding two eigenvalues are real, and are approximately equal to
  \[\mu_1 \approx 1.5472 \text{ and } \mu_2 \approx 0.6463.\]
  For \(\lambda = (1- \sqrt{4\tau + 5})/2\), the corresponding two eigenvalues are complex conjugates of the form \[\mu_3 = e^{i\theta} \text{ and } \mu_4 = e^{-i \theta},\] where
  \(4 \cos \theta = 1 - \sqrt{4 \tau + 5}\).
  Finally, let \(\mu_5 = -1\) be the last eigenvalue.
  Thus the largest eigenvalue is \(\mu_1 \approx 1.5472\), while the other eigenvalues have modulus equal to at most \(1\).
  
  We can thus write the vector \(v(\alpha_3)\) into a basis of eigenvectors as 
  \(v(\alpha_3) = \sum_{i = 1}^5 c_i v_{\mu_i}\).
  Then for any \(n \in \mathbb{N}\) we have
  \[u^n v(\alpha_3) = \sum_{i=1}^5 \mu_i^n c_i v_{\mu_i}.\]

  We can verify by hand that \(v_{\mu_1} \in C\).
  Since \(\mu_1\) is the dominant eigenvalue, if \(c_1 \neq 0\) we must have that \(u^rv (\alpha_3) \in C\) for \(r \gg 0\).
  In fact, we desire this result for all \(r \geq 0\).
  Thus we use the following estimate.

  Consider any real-valued linear functional \(f \colon \mathbb{R}^5 \to \mathbb{R}\).
  By extending scalars, we can view this as a functional \(f \colon \mathbb{C}^5 \to \mathbb{C}\).
  Let \(x_1, x_2 \in \mathbb{C}^5\) such that \(f(x_1 + x_2 ) \in \mathbb{R}\).
  Then we have the following obvious inequality:
  \[
    f(x_1 + x_2) \geq -|f(x_1)| - |f(x_2)|.
  \]
  By iterating this observation and noting that \(v(\alpha_3)\) as well as three out of five eigenvectors \(v_{\mu_i}\) are real, we see that
  \[
    f(v(\alpha_3)) \geq f(c_1v_{\mu_1}) - \sum_{i=2}^5|f(c_iv_{\mu_i})|.
  \]
  Similarly,
  \begin{equation}\label{eq:first-inequality}
    f(u^rv(\alpha_3)) \ge \mu_1^rf(c_1 v_{\mu_1}) - \sum_{i=2}^5|\mu_i^r||f(c_iv_{\mu_i})|.
  \end{equation}
  Consider the right hand side of the inequality above.
  Dividing through by the (positive) quantity \(\mu_1^r\), we obtain
  \[f(c_1 v_{\mu_1}) - \sum_{i=2}^5\left| \left( \frac{\mu_i}{\mu_1} \right)^r\right||f(c_iv_{\mu_i})|.\]
  Since \(|\mu_i/\mu_1| < 1\) for \(i \in \{2, \ldots ,5\}\), we see in turn that
  \begin{equation}\label{eq:second-inequality}    
    f(c_1 v_{\mu_1}) - \sum_{i=2}^5\left| \left( \frac{\mu_i}{\mu_1} \right)^r\right||f(c_iv_{\mu_i})| > f(c_1 v_{\mu_1}) - \sum_{i=2}^5|f(c_iv_{\mu_i})|.
  \end{equation}
  Combining~\eqref{eq:first-inequality} with~\eqref{eq:second-inequality}, we see that
  \begin{equation}\label{eq:combined-inequality}
    f(u^rv(\alpha_3)) > \mu_1^r\left( f(c_1v_{\mu_1}) - \sum_{i=2}^5|f(c_iv_{\mu_i}) \right).
  \end{equation}
  Thus to check that \(u^rv(\alpha_3) \in C\), we simply have to check the positivity of the right hand side of~\eqref{eq:combined-inequality} for each linear functional \(f\) from~\eqref{eq:cone-inequalities}.
  We have verified this numerically via SageMath~\cite{the:25}.
  The full calculation is tedious and we do not write it out here.
  However, for the convenience of the reader and for purposes of verification, we write out the exact values of the eigenvectors we have used as well approximate values of the \(c_i\).
  
  We use the following eigenvector \(v_{\mu}\) for any eigenvalue \(\mu\):
  \[v_\mu =
    \begin{pmatrix}
      \mu^2&
      \mu^2 + \mu&
      \mu^2 + \mu + 1&
      \mu^3 + \mu^2 + \mu + 1&
      \mu^4 + \tau
    \end{pmatrix}^{t}.
  \]
  For the eigenvectors as above, the coefficients \(c_i\) for the eigenvector \(v_{\mu_i}\) are approximately as follows:
  \begin{align*}
    c_1 &\approx 0.75876, \quad c_2 \approx -0.49041,\\
    c_3 &\approx -0.07770 + 0.03905 \sqrt{-1}, \quad c_4 \approx -0.07770 - 0.03905 \sqrt{-1},\\
    c_5 &\approx -0.11294.
  \end{align*}
  Thus the claim~\ref{it:c-depth} is proved.

  We now show~\ref{it:c-min-suffix}.
  Suppose for contradiction that there is some word \(\mathbf{w}_r'\) with \(|\mathbf{w}_r'|) < r\) such that \(\mathbf{w}_r\mathbf{w}_r' \in \pal\).
  Then we can ``cancel'' \(\mathbf{w}_r'\) with the reverse of \(\mathbf{w}_r'\) from the front and back of \(\mathbf{w}_r\mathbf{w}_r'\) to yield the palindromic word
  \(\mathbf{x} \mathbf{u}^m \mathbf{v} s_3\)
  for some \(m < r\), where \(\mathbf{x}\) is a tail of \(\mathbf{u}\) of length at most \(6\).
  However, it is clear from the forms of \(\mathbf{u}\) and \(\mathbf{v}\) that no such word can be palindromic.
\end{proof}

\section{Proof of the main theorem}
The aim of this section is to prove~\cref{thm:all-coxeter-groups-main}.
We follow the strategy of the classification of finite Coxeter systems from e.g.~\cite[Theorem 2.7]{hum:90}.
The main observation is that if a Coxeter graph \(\Gamma\) has a subgraph \(\Gamma'\) whose associated Coxeter group \(W_{\Gamma'}\) is infinite, then the group \(W_{\Gamma}\) is also infinite.
In~\cref{lem:bootstrap-from-subgraph} we adapt this idea to show that if \(\Gamma\) contains one of the  ``bad subgraphs'' discussed in~\cref{sec:bad-subgraphs} then the language of palindromic reduced expressions for \(\Gamma\) is not regular.
We use this result to deduce~\cref{thm:all-coxeter-groups-main}.

We first recall the notion of a subgraph in this context.
\begin{definition}\label{def:subgraph}
  Let \(\Gamma\) be a Coxeter graph.
  A \defn{proper subgraph} \(\Gamma'\) of \(\Gamma\) is a graph obtained by doing one or both of the following:
  \begin{enumerate}
  \item omiting one or more vertices and all edges adjacent to the omitted vertices, or
  \item decreasing the labels on one or more edges.
  \end{enumerate}
  We say that \(\Gamma'\) is a \defn{subgraph} of \(\Gamma\) if either \(\Gamma' = \Gamma\) or \(\Gamma'\) is a proper subgraph of \(\Gamma\).
  If \(\Gamma'\) is a subgraph of \(\Gamma\), then we say that \(\Gamma\) \defn{contains} \(\Gamma'\).
\end{definition}

The following lemma is the key technical tool that allows us to reduce to bad subgraphs.
\begin{lemma}\label{lem:bootstrap-from-subgraph}
  Let \(\Gamma\) be a Coxeter graph and let \(\Gamma'\) be a subgraph.
  Let \((W,S)\) and \((W',S')\) be the Coxeter systems associated to \(\Gamma\) and \(\Gamma'\) respectively.
  Note that \(S' \subset S\).
  Consider a word \(\mathbf{w}\) in \((S')^{\ast}\) that is reduced in \(W'\).
  Then the same word \(\mathbf{w}\), regarded as an element of \(S^{\ast}\), is also reduced in \(W\).
\end{lemma}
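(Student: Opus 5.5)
We prove the statement by induction on the length of \(\mathbf{w}\), using roots and depth rather than word combinatorics. Both operations in \cref{def:subgraph} can be handled together. Vertex deletion is the standard fact that a parabolic subgroup \(W_{S'}\) is itself a Coxeter system with length the restriction of \(\ell\). It is subsumed anyway if we compare the root spaces \(V_{W'} \subset V_W\), where \(V_{W'}\) is spanned by \(\alpha_s\) for \(s \in S'\).

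The key numerical input is this. Decreasing labels, and setting \(m(s,s')=2\) for deleted edges, only weakly increases \(\cos(\pi/m)\). Hence
\[-(\alpha_s,\alpha_{s'})_{W} \geq -(\alpha_s,\alpha_{s'})_{W'} \geq 0 \quad \text{for } s\neq s' \in S'.\]
The central claim, which we prove by induction on \(|\mathbf{x}|\), is the following. Let \(\mathbf{x}\in (S')^\ast\) and \(t\in S'\), and suppose the positive root \(\beta' = x(\alpha_t)\) of \(W'\) satisfies \(\depth_{W'}(\beta') = |\mathbf{x}|+1\), meaning the depth went up at every step. Then the vector \(\beta = x(\alpha_t)\), computed in \(V_W\), satisfies two things. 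Its coefficients dominate those of \(\beta'\) coordinatewise, and it is supported on \(S'\). Moreover, at each step of the dual numbers game in \(W\) the label at the acted node strictly increases.

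The inductive step goes through the dual numbers game. In \(W'\), applying \(s\) replaces \(c_s\) by \(-c_s + \sum_j (-2(\alpha_s,\alpha_j)_{W'}) c_j\), and this exceeds \(c_s\) by hypothesis. In \(W\), the same move uses larger nonnegative weights on the larger nonnegative coefficients. The difference between the new and old label is
\[-2c_s + \sum_j (-2(\alpha_s,\alpha_j))c_j,\]
and this is where the obstacle sits. The term \(-2c_s\) is larger in magnitude in \(W\), so domination alone does not immediately give strict increase.

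I would first try to strengthen the induction hypothesis to domination of the differences. Alternatively, I would use a cleaner reformulation: strict increase in \(W'\) is equivalent to \((\beta',\alpha_s)_{W'}<0\) by \cref{prop:depthchange}, so it suffices to show \((\beta,\alpha_s)_W \le (\beta',\alpha_s)_{W'}\). Write
\[(\beta,\alpha_s) = c_s + \sum_{j\ne s} (\alpha_s,\alpha_j)c_j.\]
This is not monotone in \(c_s\), so this route also needs care. The cleanest route I would commit to avoids roots entirely and reduces reducedness to the ambient group via a deletion-condition argument.

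Concretely, that argument runs as follows. Suppose \(\mathbf{w}\) is reduced in \(W'\) but not in \(W\). Take a shortest such \(\mathbf{w} = \mathbf{x}s\), where \(\mathbf{x}\) is reduced in \(W\) and \(\ell_W(xs)<\ell_W(x)\). Then \(x(\alpha_s)\) is a negative root in \(V_W\). But \(\mathbf{x}s\) is reduced in \(W'\), so \(x(\alpha_s)\) is positive in \(V_{W'}\). We therefore need a comparison showing that positivity transfers from \(W'\) to \(W\) for words whose every prefix is reduced in \(W'\).

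This brings us back to the monotone domination claim. I would prove it in its strongest form: for reduced words, the coefficients of \(x(\alpha_s)\) in \(W\) are at least those in \(W'\). This should follow from an induction on \(\ell\) applied to \(x^{-1}\) acting on positive roots, using the characterisation of inversions. I expect verifying this monotonicity, handling the \(-2c_s\) term correctly, to be the main technical step.
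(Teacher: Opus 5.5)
There is a genuine gap. Every route you sketch ends at the coordinatewise domination claim: the coefficients of \(x(\alpha_s)\) computed in \(V_W\) dominate those computed in \(V_{W'}\) whenever \(\mathbf{x}s\) is reduced in \(W'\). This claim is never proved. You correctly see that the obvious induction breaks on the \(-c_s\) term of the dual numbers move, but nothing replaces it. The claim is also stronger than the lemma itself, and its truth depends on reducedness in a delicate way. Already in rank two the coefficients are Chebyshev polynomials in \(2\cos(\pi/m)\), which are monotone on the range allowed by reducedness but not globally.

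Your shortest-counterexample reduction is valid. But what it reduces to is this: \(x(\alpha_s)>0\) in \(V_{W'}\) forces \(x(\alpha_s)>0\) in \(V_W\) for every \(W'\)-reduced \(\mathbf{x}s\). That is just the lemma restated in root language, so on its own it makes no progress. Your first formulation would not suffice even if proved, because it assumes the \(W'\)-depth rises at every step. In type \(A_2\) the word \(s_1s_2s_1\) is reduced, yet \(s_1s_2(\alpha_1)=\alpha_2\) has smaller depth than \(s_2(\alpha_1)=\alpha_1+\alpha_2\).

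The missing idea, which is what the paper uses, is to stay with words and apply Tits' word property (Matsumoto's theorem).
\begin{itemize}
\item If \(\mathbf{w}\in(S')^{\ast}\) is not reduced in \(W\), some finite sequence of braid moves of \(W\) turns it into a word containing a factor \(ss\). All intermediate words stay in \((S')^{\ast}\).
\item Suppose the current word is reduced in \(W'\). The next move replaces an alternating factor of length \(m(s,t)<\infty\). As a factor of a \(W'\)-reduced word, this factor is itself \(W'\)-reduced, so \(m(s,t)\le m'(s,t)\). Combined with \(m'(s,t)\le m(s,t)\), this forces equality.
\item Hence the move is also a braid move in \(W'\). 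The new word represents the same element of \(W'\) with the same number of letters, so it is again reduced in \(W'\).
\item By induction you reach a \(W'\)-reduced word containing \(ss\), which is absurd.
\end{itemize}
This handles vertex deletion and label decrease simultaneously and needs no comparison of root coefficients.
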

\begin{remark}
  Before we prove this lemma, we make the following remark.
  Suppose that \(\Gamma'\) is a subgraph of a Coxeter graph \(\Gamma\) with associated Coxeter systems \((W',S')\) and \((W,S)\) respectively.
  There is no obvious group-theoretic relationship between \(W\) and \(W'\), and the map of monoids \((S')^{\ast} \to S^{\ast}\) described above does not necessarily descend to a group map.
  We also caution the reader that the lemma stated above is only true in one direction.
  Indeed, words in the letters \(S'\) that are reduced in the group \(W\) are not necessarily reduced in \(W'\).
  For instance, most reduced words in the infinite dihedral group become reduced in any fixed finite dihedral group.
\end{remark}
\begin{proof}[Proof of~\cref{lem:bootstrap-from-subgraph}]
  To prove this lemma, we use Matsumoto's theorem (see, e.g.~\cite[Theorem 3.3.1]{bjo.bre:05}).
  This theorem states the following.
  Suppose that \(\mathbf{w}\) and \(\mathbf{w'}\) are two words representing the same element \(w\) of a Coxeter group, such that \(|\mathbf{w'}| \leq |\mathbf{w}|\).
  Then \(\mathbf{w}\) can be transformed to \(\mathbf{w'}\) by a sequence of \emph{nil moves} (cancelling successive instances of the same letter) and \emph{braid moves} (replacing an expression of \(stst \cdots\) of length \(m(s,t)\) by the expression \(tsts\cdots\) of length \(m(s,t)\)).

  Let \(\mathbf{w'} = s_{i_1} \cdots s_{i_k}\) be an element of \((S')^{\ast}\) that is reduced in \(W'\).
  Suppose for contradiction that \(\mathbf{w}\) is not reduced in \(W\).
  Thus there is a sequence of braid and nil moves that transforms \(\mathbf{w}\) into an expression \(\mathbf{u}\) of smaller length, such that \(\mathbf{w}\) and \(\mathbf{u}\) represent the same word in \(W\).

  Consider the first move in this sequence.
  It cannot be a nil move, because otherwise the same nil move would have been possible in \(W'\) as well.
  This cannot happen because \(\mathbf{w'}\) is reduced in \(W'\).
  Thus the first move is a braid move.
  For \(s,t \in S'\), let \(m'(s,t)\) and \(m(s,t)\) be the orders of the element \(st\) in \(W'\) and \(W\) respectively.
  The first braid move is then of the form \(stst \cdots = tsts \cdots\) where each side has \(m(s,t)\) terms.
  Recall that due to the subgraph condition, we have \(m'(s,t) \leq m(s,t)\).
  If the inequality is strict, then the subexpression \(stst \cdots\) of \(\mathbf{w'}\) with \(m(s,t)\) terms is itself not reduced in \(W'\), which is impossible.
  Thus we must have \(m'(s,t) = m(s,t)\) in this case.
  This means that the first move in the sequence can be performed in both \(W\) and \(W'\).
  Proceeding along the chain of moves in this manner, we obtain a contradiction to the fact that \(\mathbf{w'}\) is reduced in \(W'\).
\end{proof}
Before proceeding with the main plan for this section, we briefly digress to discuss the following corollary of~\cref{lem:bootstrap-from-subgraph}.
\begin{proposition}\label{prop:bootstrap-cardinality}
  Let \(\Gamma\) be a Coxeter graph and let \(\Gamma'\) be a subgraph of \(\Gamma\).
  Let \((W,S)\) and \((W',S')\) be the Coxeter groups associated to \(\Gamma\) and \(\Gamma'\).
  Fix a total order on \(S'\).
  This order induces a lexicographic order on \((S')^{\ast}\).
  For any element \(w \in W'\), let \(\mathbf{w}^{\min}\) be the lexicographically minimal reduced word that represents \(w\).
  Then the function \(W' \to S^{\ast}\) defined by \(w \mapsto \mathbf{w}^{\min}\) induces a (set-theoretic) injection \(W' \to W\).
  In particular, if \(W'\) is infinite, then so is \(W\).
\end{proposition}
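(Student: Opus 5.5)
The map is the composite \(W' \to (S')^{\ast} \hookrightarrow S^{\ast} \to W\): first choose the lexicographically minimal reduced word \(\mathbf{w}^{\min}\) in \((W',S')\), then regard it as a word in \(S^{\ast}\), then take its product in \(W\).

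To show this is injective, take \(w_1, w_2 \in W'\) with the same image \(x \in W\).
By~\cref{lem:bootstrap-from-subgraph}, both \(\mathbf{w}_1^{\min}\) and \(\mathbf{w}_2^{\min}\) are reduced words in \(W\) for the same element \(x\).
So they have the same length \(\ell(x)\), and the main task is to deduce \(\mathbf{w}_1^{\min} = \mathbf{w}_2^{\min}\), which gives \(w_1 = w_2\).

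By Matsumoto's theorem, two reduced words for \(x\) are related by a sequence of braid moves in \(W\) alone, with no nil moves.
I would argue as in the proof of~\cref{lem:bootstrap-from-subgraph} that every intermediate word stays in \((S')^{\ast}\) and stays reduced in \(W'\).

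\begin{itemize}
\item Staying in \((S')^{\ast}\) is automatic, since braid moves only reuse the two letters involved.
\item Suppose a braid move in \(W\) uses the relation for \(s,t\) with \(m(s,t)\) letters, and \(m'(s,t) < m(s,t)\).
Then the current word contains an alternating factor of length \(m(s,t) > m'(s,t)\).
Such a factor is not reduced in \(W'\), so the word is not reduced in \(W'\), contradicting the inductive hypothesis.
Hence \(m'(s,t) = m(s,t)\), and the move is also a valid braid move in \(W'\).
\item Each word produced is again reduced in \(W'\), because it represents the same element of \(W'\) with the same number of letters.
\end{itemize}

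By induction along the chain, \(\mathbf{w}_1^{\min}\) and \(\mathbf{w}_2^{\min}\) represent the same element of \(W'\), so \(w_1 = w_2\).

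The delicate point is that the chain of braid moves is given by Matsumoto's theorem in \(W\), not in \(W'\). We therefore check at each step that the current word is still reduced in \(W'\) before applying the next move, which is the induction above.

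Notice that lexicographic minimality is never used; any choice of reduced word for each element of \(W'\) gives an injection. The final claim is immediate: an injection from an infinite set \(W'\) into \(W\) forces \(W\) to be infinite.
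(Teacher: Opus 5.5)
Your proposal is correct and follows essentially the same route as the paper. Both words are reduced in \(W\) by the preceding lemma, so Matsumoto's theorem links them by braid moves in \(W\). The inductive check that each move has \(m'(s,t) = m(s,t)\) (otherwise the current word would contain an alternating factor that is not reduced in \(W'\)) shows the whole chain is valid in \(W'\), which forces \(w_1 = w_2\). Your remark that lexicographic minimality is never used applies equally to the paper's own argument.
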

\begin{proof}
  Let \(\mu \colon S^{\ast} \to W\) be the natural (product) map.
  The claim is that the function \(W' \to W\) defined as \(w \mapsto \mu(\mathbf{w}^{\min})\) is injective.
  Suppose not; then there are two elements \(u,w \in W'\) such that \(\mu(\mathbf{u}^{\min}) = \mu(\mathbf{w}^{\min})\).
  Since both words are reduced in \(W'\) and hence also in \(W\) by~\cref{lem:bootstrap-from-subgraph}, they have the same number of letters.
  Then once again by Matsumoto's theorem, we can transform \(\mathbf{u}^{\min}\) to \(\mathbf{w}^{\min}\) by just a sequence of braid moves in \(W\).
  We proceed as in the proof of~\cref{lem:bootstrap-from-subgraph}.
  The first move must be a braid move of the form \(stst \cdots = tsts \cdots\) with \(m(s,t)\) factors, where \(m(s,t)\) is the order of \(st\) in \(W\).
  If \(m'(s,t)\) is the order of \(st\) in \(W'\), then we have \(m'(s,t) \leq m(s,t)\).
  If the inequality is strict, then \(\mathbf{u}^{\min}\) cannot be reduced in \(W'\).
  Thus we have \(m'(s,t) = m(s,t)\) and the first braid move is also possible in \(W'\).
  Proceeding by induction, we see that \(w = u\) in \(W'\) as well.
\end{proof}
\begin{remark}\label{rem:new-proof-classification}
  The proposition above can be used to give a slightly simpler proof of the classification of finite Coxeter groups.
  The standard proof proceeds by observing that the bilinear form~\eqref{eq:bilinear} is positive definite for any finite Coxeter system, and then showing that no other Coxeter graph can have a positive-definite bilinear form.
  We bypass the discussion of positive-definiteness, and instead use the following chain of reasoning.
  \begin{enumerate}
  \item The geometric representation is faithful (\cref{prop:geom-rep-faithful}). The proof of this statement relies on the fact that all relations in a Coxeter system are of dihedral type.
  \item The root space for all affine types, as well as the types \(Z_4\) and \(Z_5\) from the previous section, contains infinitely many roots.
    This fact is easy to verify by hand; for instance by using the ideas in the proofs of~\cref{prop:affine-types,prop:z4,prop:z5}.
  \item Any Coxeter graph that does not contain any affine type or type \(Z_4\) or \(Z_5\) as a subgraph must be one of the ones listed in the first two columns of~\cref{tab:finite-coxeter}. The proof of this follows the same ideas as in~\cite[Theorem 2.7]{hum:90}, using our~\cref{prop:bootstrap-cardinality} as a drop-in replacement for~\cite[Corollary 2.6]{hum:90}.
  \item Finally, we verify by hand that the graphs listed in first two columns of~\cref{tab:finite-coxeter} indeed have finite Coxeter groups.
    This can be done, for instance, by enumerating all the roots as in~\cite[Sections 2.10 and 2.13]{hum:90}.
  \end{enumerate}
\end{remark}
Returning to the main plan of this section, we are interested in proving that whenever a graph contains certain bad subgraphs, its associated language \(\pal\) is non-regular.
The bad subgraphs we need to handle are exactly the affine Coxeter graphs other than \(\widetilde{A_1}\), as well as \(Y_3\), \(Z_4\), and \(Z_5\).
We know from~\cref{prop:affine-types,prop:y3,prop:z4,prop:z5} that the languages of palindromic reduced expressions for these types are not regular.
The next lemma bootstraps these results to graphs containing these as subgraphs.
\begin{lemma}\label{lem:bootstrap-from-subgraph}
  Let \(\Gamma\) be a Coxeter graph that contains a subgraph that is either affine with at least two vertices, or equal to one of \(Y_3\), \(Z_4\), or \(Z_5\).
  Then the language of palindromic reduced expressions for \(\Gamma\) is not regular.
\end{lemma}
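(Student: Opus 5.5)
Let \(\Gamma'\) be the bad subgraph (affine with at least two vertices, or one of \(Y_3\), \(Z_4\), \(Z_5\)), with Coxeter system \((W',S')\). Let \((W,S)\) be the system for \(\Gamma\). The plan is to transport the infinite family of Myhill--Nerode inequivalent words from \(W'\) to \(W\).

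\textbf{Step 1: the family in \(W'\).} The proofs of \cref{prop:affine-types,prop:y3,prop:z4,prop:z5} provide words \(\mathbf{u},\mathbf{v}\in(S')^{\ast}\) and \(s\in S'\) satisfying \ref{it:c-depth} and \ref{it:c-min-suffix} in \(W'\). Put \(\mathbf{w}_r=\mathbf{u}^r\mathbf{v}s\). As in the proof of \cref{lem:common-proof-schema}, the palindrome \(\mathbf{w}_r\overline{\mathbf{v}}\,\overline{\mathbf{u}}^r\) is reduced in \(W'\).

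\textbf{Step 2: the certificates survive in \(W\).} By the first lemma labelled \cref{lem:bootstrap-from-subgraph} (reduced words in \(W'\) stay reduced in \(W\)), the word \(\mathbf{w}_r\overline{\mathbf{v}}\,\overline{\mathbf{u}}^r\) is reduced in \(W\). It is palindromic, so it lies in \(\pal(W)\). Hence \(\mathbf{w}_r\) has a completion in \(\pal(W)\) of length \(r|\mathbf{u}|+|\mathbf{v}|\).

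\textbf{Step 3: short completions are still impossible.} We show that no \(\mathbf{w}'\in S^{\ast}\) with \(|\mathbf{w}'|\le r\) satisfies \(\mathbf{w}_r\mathbf{w}'\in\pal(W)\). Suppose one did. Palindromicity forces \(\mathbf{w}'\) to be the reverse of a prefix of \(\mathbf{w}_r\), so \(\mathbf{w}'\in(S')^{\ast}\). Cancelling \(\mathbf{w}'\) and its reverse from the two ends, exactly as in the earlier proofs, leaves a palindromic word of the form \(\mathbf{x}\mathbf{u}^m\mathbf{v}s\). This is a purely combinatorial statement about letters, and the earlier case checks already show such a word is never a palindrome. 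Note that the argument never uses reducedness in \(W'\) versus \(W\), so it holds verbatim in \(W\). This is the step to watch: one must check that each earlier proof of \ref{it:c-min-suffix} really used only the palindrome shape and not reducedness in \(W'\). Inspection of those proofs confirms this.

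\textbf{Step 4: conclude.} Steps 2 and 3 give conditions playing the roles of \ref{it:c-depth} (via the explicit completion) and \ref{it:c-min-suffix} for \(\pal(W)\). The argument in the proof of \cref{lem:common-proof-schema} uses only these two facts. Running it with the subsequence \(r_j>cr_i\) gives infinitely many pairwise \(\sim_{\pal(W)}\)-inequivalent words. By \cref{thm:myhill-nerode}, \(\pal(W)\) is not regular.
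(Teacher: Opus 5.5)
Your proposal is correct and follows the paper's proof essentially step for step: you lift the certificate palindrome \(\mathbf{w}_r\overline{\mathbf{v}}\,\overline{\mathbf{u}}^r\) to \(W\) using the fact that words reduced in \(W'\) stay reduced in \(W\), observe that the \ref{it:c-min-suffix} argument uses only the palindromic shape (your remark that any completion is forced into \((S')^{\ast}\) makes explicit what the paper leaves implicit), and rerun the Myhill--Nerode argument of \cref{lem:common-proof-schema}. One caveat, inherited from the paper: the leftover word \(\mathbf{x}\mathbf{u}^m\mathbf{v}s\) \emph{can} be a palindrome for small \(m\) (in \(Z_4\), \(s_2s_3s_2\,\mathbf{u}\mathbf{v}s_2\) is one), so ``never a palindrome'' is false as a blanket claim and the case check genuinely needs the bound \(|\mathbf{w}'|\le r\) to force \(m\) to be large, which it does.
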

\begin{proof}
  Let \(\Gamma' \subset \Gamma\) be a subgraph of one of the types listed in the statement of the lemma.
  Let \((W',S')\) and \((W,S)\) be the corresponding Coxeter systems.
  As previously discussed, we have \(S' \subset S\).
  Let \(\pal'\) and \(\pal\) be the languages of palindromic reduced expressions for \((W',S')\) and \((W,S)\) respectively.

  Let \(\Gamma'\) be one of the subgraphs listed in the lemma: affine with at least two vertices, or one of \(Y_3\), \(Z_4\), or \(Z_5\).
  Then there are infinitely many Myhill--Nerode equivalence classes for \(\pal'\).
  Furthermore, we have produced in each case a countable list of pairwise inequivalent reduced words \(\{\mathbf{w}_r = \mathbf{u}^r\mathbf{v} s\mid r \in \mathbb{N}\}\) where \((\mathbf{u},\mathbf{v},s)\) satisfy the conditions of~\cref{lem:common-proof-schema}.
  Since \(S' \subset S\), we can regard the same words as being words in \(W\).
  When regarded as words in \(W\), we write \(\mathbf{w}_r\) as \(\mathbf{y}_r\) for clarity.

  We claim that the words \(\{\mathbf{y}_r \mid r \in \mathbb{N}\}\) contain representatives of infinitely many distinct equivalence classes with respect to the Myhill--Nerode equivalence relation of \(\pal\).
  By~\cref{lem:bootstrap-from-subgraph}, the words \(\mathbf{y}_r\) are reduced in \(W\).
  It follows from the proof of~\cref{lem:common-proof-schema} that for every \(r \in \mathbb{N}\) there is a reduced word \(\mathbf{w}_r'\) in \(\pal'\) such that \(\mathbf{w}_r\mathbf{w}_r' \in \pal'\).
  Take any such \(\mathbf{w}_r'\) and consider the corresponding word \(\mathbf{y}'_r\) in \(W\).
  Then \(\mathbf{y}_r\mathbf{y}'_r\) is reduced by~\cref{lem:bootstrap-from-subgraph}, and evidently palindromic, and thus \(\mathbf{y}_r\mathbf{y}_r' \in \pal\).
  The same proof strategy as that of~\ref{it:c-min-suffix} in each case shows that \(|\mathbf{y}'_r| > r\).

  Now by repeating the proof strategy of~\cref{lem:common-proof-schema}, we see that the set \(\{\mathbf{y}_r \mid r \in \mathbb{N}\}\) contains infinitely many Myhill--Nerode equivalence classes for \(\pal\).
  Thus \(\pal\) is not regular.
\end{proof}

The next result classifies all Coxeter graphs that contain the ``bad subgraphs'' from~\cref{prop:affine-types,prop:y3,prop:z4,prop:z5}.
It is almost a restatement of the classification theorem of finite and affine Coxeter groups, which is e.g.~\cite[Theorem 2.7]{hum:90}.
\begin{proposition}\label{prop:bad-subgraphs}
  Let \(\Gamma\) be a connected Coxeter graph and let \(W_{\Gamma}\) be its associated Coxeter group.
  If \(\Gamma\) is not of type \(\widetilde{A_1}\), then \(W_{\Gamma}\) is infinite if and only if it contains one of the affine Coxeter graphs, or one of the graphs \(Y_3\), \(Z_4\), or \(Z_5\).
\end{proposition}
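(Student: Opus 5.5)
To prove \cref{prop:bad-subgraphs}, I would treat the two directions separately.

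For the ``if'' direction, suppose \(\Gamma\) contains a subgraph \(\Gamma'\) that is affine, or equal to \(Y_3\), \(Z_4\), or \(Z_5\). Then I would first show that \(W_{\Gamma'}\) is infinite, and afterwards transfer this to \(W_\Gamma\) by \cref{prop:bootstrap-cardinality}.

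\begin{itemize}
\item For the affine types other than \(\widetilde{A_1}\), and for \(Y_3\), \(Z_4\), \(Z_5\), the computations in \cref{prop:affine-types,prop:y3,prop:z4,prop:z5} exhibit roots of unbounded depth. Condition~\ref{it:c-depth} of \cref{lem:common-proof-schema} gives this directly. Hence there are infinitely many roots.
\item Distinct positive roots give distinct reflections, so \(W_{\Gamma'}\) has infinitely many reflections. Alternatively, \cref{prop:depth-of-reflection} gives elements of unbounded length, so \(W_{\Gamma'}\) is infinite.
\item If \(\Gamma' = \widetilde{A_1}\) is a proper subgraph of \(\Gamma\), then the edge label \(\infty\) survives in \(\Gamma\), and the infinite dihedral group is again infinite.
\end{itemize}

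In all cases \cref{prop:bootstrap-cardinality} then shows that \(W_\Gamma\) is infinite. The hypothesis that \(\Gamma\) is not of type \(\widetilde{A_1}\) plays no real role here.

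For the ``only if'' direction I would argue by contrapositive. Assume \(\Gamma\) is connected, is not \(\widetilde{A_1}\), and contains none of the listed graphs. I would show that \(\Gamma\) appears in the first two columns of \cref{tab:finite-coxeter}, hence \(W_\Gamma\) is finite. This follows the combinatorial case analysis of \cite[Theorem 2.7]{hum:90}, with ``positive definite'' replaced by ``contains no bad subgraph''. The steps, in order:

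\begin{enumerate}
\item \(\Gamma\) contains no cycle, since a cycle contains \(\widetilde{A_n}\) for some \(n \geq 2\), after decreasing labels.
\item If some label is \(\infty\) and \(\Gamma\) has at least three vertices, then \(\Gamma\) contains \(Y_3\), or \(\widetilde{C_2}\), or \(\widetilde{G_2}\). So an \(\infty\) label forces \(\Gamma = \widetilde{A_1}\), which is excluded.
\item There is at most one edge with label at least \(4\), because two such edges contain \(\widetilde{C_n}\).
\item There is at most one branch vertex, since two branch vertices give \(\widetilde{D_n}\). A branch vertex has degree exactly \(3\), since degree \(4\) gives \(\widetilde{D_4}\).
\item A branch vertex and a label at least \(4\) cannot coexist, since together they contain \(\widetilde{B_n}\).
\end{enumerate}

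What remains are paths with one heavy edge and trees with a single trivalent vertex. I would finish these by local exclusions:

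\begin{itemize}
\item Label \(m \geq 6\) on a path with at least three vertices contains \(\widetilde{G_2}\). So with three or more vertices the label is \(4\) or \(5\).
\item Label \(4\): an interior position contains \(\widetilde{F_4}\) unless the path is \(F_4\) itself. An end position gives \(B_n\).
\item Label \(5\): an interior position contains \(Z_4\). An end position with more than four vertices contains \(Z_5\), leaving \(H_3\) and \(H_4\).
\item Trivalent trees with arms \((p,q,r)\): exclude \((1,2,2)\) via \(\widetilde{E_6}\), \((1,1,3)\) via \(\widetilde{E_7}\), and \((0,2,5)\) via \(\widetilde{E_8}\), leaving exactly \(D_n\), \(E_6\), \(E_7\), \(E_8\). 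Here \(p,q,r\) count edges, with the \(\widetilde{E}\) graphs having arms \((2,2,2)\), \((1,3,3)\), \((1,2,5)\).
\end{itemize}

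Finally, finiteness of the groups in the table is the known classification recalled above.

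I expect the main obstacle to be bookkeeping rather than ideas. The hard part is making sure each exclusion genuinely finds one of the listed graphs as a subgraph in the sense of \cref{def:subgraph}, where deleting vertices and decreasing labels are both allowed. Particular care is needed with the indexing conventions for \(\widetilde{C_n}\) and \(\widetilde{B_n}\) at small \(n\), and with the label-\(5\) cases that rely on \(Z_4\) and \(Z_5\).
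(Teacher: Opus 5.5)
Your proposal is correct and follows the paper's approach: cycles are ruled out via \(\widetilde{A_n}\) and \(\infty\) labels via \(Y_3\), and then the flowchart of \cite[Theorem 2.7]{hum:90} is run with ``contains no bad subgraph'' in place of positive definiteness; you write that flowchart out and also supply the ``if'' direction (unbounded root depth plus \cref{prop:bootstrap-cardinality}) that the paper's proof leaves implicit. One bookkeeping slip needs fixing: the trivalent tuples you ``exclude'' are wrong as written (arms \((1,2,2)\) is \(E_6\) itself, \((1,1,3)\) is \(D_6\), and \((0,2,5)\) is not trivalent), so with arms \(p\le q\le r\) counting edges the exclusions should be \(p\ge 2\) (contains \(\widetilde{E_6}\)), \(p=1,\ q\ge 3\) (contains \(\widetilde{E_7}\)), and \(p=1,\ q=2,\ r\ge 5\) (contains \(\widetilde{E_8}\)), and you should also list the paths with no heavy edge (\(A_n\)) and the two-vertex dihedral graphs among the survivors.
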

\begin{proof}
  This result follows from the proof of~\cite[Theorem 2.7]{hum:90} after a small modification.
  That proof consists of a 20-step flowchart that classifies the possible \(\Gamma\).
  The aim of that proof is to rule out all types other than affine types.
  For our proof, we wish to rule out all affine types other than \(\widetilde{A_1}\).

  Let \(\Gamma\) be a connected Coxeter graph that is not \(\widetilde{A_1}\), and does not contain as a subgraph one of the affine types with at least \(2\) vertices, or any of the graphs \(Y_3\), \(Z_4\), or \(Z_5\).
  Let \(n\) be the number of vertices in \(\Gamma\), and let \(m\) be its maximum edge label.

  If \(n = 2\) then \(\Gamma\) must be of finite dihedral type.
  So let us assume that \(n \geq 3\).
  \(\Gamma\) cannot have a cycle, otherwise it would contain affine \(A_n\) for some \(n\).
  Thus \(\Gamma\) is a tree.
  We must have \(m < \infty\), otherwise \(\Gamma\) contains \(Y_3\) as a subgraph.
  After this modification, the remainder of the proof of~\cite[Theorem 2.7]{hum:90} (starting from step (3) of that proof) goes through verbatim.
\end{proof}

The next lemma handles Coxeter systems that may have disconnected Coxeter graphs.
\begin{lemma}\label{prop:disconnected-coxeter-system}
  Let \(\Gamma\) be any Coxeter graph with connected components \(\Gamma_1, \ldots, \Gamma_r\).
  Let \(\pal_1, \ldots, \pal_r\) be the languages of palindromic reduced expressions for \(\Gamma_1, \ldots, \Gamma_r\), and let \(\pal\) be the language of palindromic reduced expressions for \(\Gamma\).
  Then \(\pal\) is regular if and only if each \(\pal_i\) is regular.
\end{lemma}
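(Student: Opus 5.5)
The plan is to show that \(\pal\) is the disjoint union of the languages \(\pal_1, \ldots, \pal_r\), each viewed inside \(S^{\ast}\) via the inclusions \(S_i^{\ast} \subset S^{\ast}\). Both directions of the lemma then follow from closure properties of regular languages. Write \(S = S_1 \sqcup \cdots \sqcup S_r\), where \(S_i\) is the vertex set of \(\Gamma_i\). Since \(m(s,s') = 2\) whenever \(s\) and \(s'\) lie in different components, we have \(W = W_1 \times \cdots \times W_r\) with \(W_i = W_{\Gamma_i}\). For a word \(\mathbf{w} \in S^{\ast}\), let \(\pi_i(\mathbf{w}) \in S_i^{\ast}\) be the subword consisting of its letters from \(S_i\). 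Because letters from different components commute, \(\pi_i(\mathbf{w})\) represents the \(i\)th coordinate of \(w\).

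The first step is length additivity: \(\ell(w) = \sum_i \ell_i(w_i)\), where \(\ell_i\) is the length function of \(W_i\) and \(w_i\) is the \(i\)th coordinate. Hence \(\mathbf{w}\) is reduced in \(W\) if and only if each \(\pi_i(\mathbf{w})\) is reduced in \(W_i\).

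The inequality \(\ell(w) \le \sum_i \ell_i(w_i)\) is clear: concatenate reduced words for the coordinates. For the reverse inequality, take a reduced word for \(w\) and split it into its projections. Each projection represents \(w_i\), so it has at least \(\ell_i(w_i)\) letters.

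Alternatively, the fact that a word in \(S_i^{\ast}\) reduced in \(W_i\) stays reduced in \(W\) is exactly \cref{lem:bootstrap-from-subgraph} applied to the subgraph \(\Gamma_i\), obtained by deleting vertices.

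The second step is the key observation that every word in \(\pal\) uses letters from a single component. Let \(\mathbf{p} \in \pal\) and write \(\mathbf{p} = \mathbf{x}\, s\, \overline{\mathbf{x}}\) with \(s \in S_i\), which is possible because \(\mathbf{p}\) has odd length (reflections have odd length). Then \(p = x s x^{-1}\). Its \(j\)th coordinate is \(x_j x_j^{-1} = 1\) for \(j \ne i\). Since \(\mathbf{p}\) is reduced, the first step shows that \(\pi_j(\mathbf{p})\) is a reduced word for the identity, so it is empty. Thus \(\mathbf{p} \in S_i^{\ast}\), and by the first step it is reduced in \(W_i\), so \(\mathbf{p} \in \pal_i\). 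Conversely, every word of \(\pal_i\) is palindromic and, by the first step (or \cref{lem:bootstrap-from-subgraph}), reduced in \(W\). This yields
\[
  \pal = \bigsqcup_{i=1}^{r} \pal_i \quad\text{and}\quad \pal_i = \pal \cap S_i^{\ast}.
\]

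The final step uses standard closure properties. If every \(\pal_i\) is regular, then \(\pal\) is a finite union of regular languages over \(S\), hence regular. Here a DFA for \(\pal_i\) over \(S_i\) becomes one over \(S\) by sending every letter outside \(S_i\) to a dead state. Conversely, if \(\pal\) is regular, then \(\pal_i = \pal \cap S_i^{\ast}\) is an intersection of two regular languages, hence regular. Restricting the alphabet to \(S_i\) does not affect regularity.

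The only point needing care is the second step, namely that a reduced palindrome cannot mix components. This rests entirely on the length additivity in the first step, so I expect the argument to be short.
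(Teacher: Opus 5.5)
Your proof is correct and has the same skeleton as the paper's. Both show \(\pal = \bigcup_i \pal_i\) with \(\pal_i = \pal \cap S_i^{\ast}\), and then use closure of regular languages under finite unions and intersections. Where you differ is the key step, that a palindromic reduced word cannot mix components.

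The paper argues locally. Suppose \(\mathbf{w}\,s\,\overline{\mathbf{w}}\) with \(s \in S_i\) contains a letter outside \(S_i\), and write \(\mathbf{w} = \mathbf{u}\,a\,\mathbf{v}\) with \(a\) the rightmost such letter. Since \(a\) commutes with \(v s v^{-1}\), the two copies of \(a\) cancel, contradicting reducedness.

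You argue globally instead. You identify \(W\) with \(W_1 \times \cdots \times W_r\) and prove \(\ell(w) = \sum_i \ell_i(w_i)\) by projecting words to components. Then the \(j\)th projection of \(\mathbf{x}\,s\,\overline{\mathbf{x}}\) is a reduced word for the identity, hence empty.

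The paper's route is shorter and uses only commutation of letters. Yours costs the product decomposition and length additivity, but in return it shows that a word in \(S_i^{\ast}\) is reduced in \(W_i\) if and only if it is reduced in \(W\). The paper uses this fact without comment when it passes from ``all letters lie in \(S_i\)'' to \(\pal = \bigcup_i \pal_i\). You also make the change of alphabet in the automata explicit.

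Two cosmetic points, neither of which affects the conclusion:
\begin{itemize}
\item The parenthetical ``reflections have odd length'' is not the right reason \(\mathbf{p}\) has odd length, since you do not yet know \(p\) is a reflection. The correct reason is that an even-length palindrome \(\mathbf{x}\,\overline{\mathbf{x}}\) represents the identity, so it is reduced only if empty.
\item If the empty word is counted in \(\pal\), it lies in every \(\pal_i\), so the union is not disjoint.
\end{itemize}
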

\begin{proof}
  Let \((W,S)\) be the Coxeter system corresponding to \(\Gamma\), and let \((W_i, S_i)\) be the Coxeter system corresponding to each \(\Gamma_i\).
  Then \(S\) is the disjoint union of the \(S_i\).
  
  We first show that \(\pal\) is simply the union of the languages \(\pal_i\).
  Indeed, consider any palindromic reduced expression in \(\pal\), of the form \(\mathbf{w}\, s
  \,\overline{\mathbf{w}} \neq 1\).
  Since \(s \in S\) and \(S\) is the disjoint union of all \(S_j\), there is a unique \(i\) such that \(s \in S_i\).
  We claim that \(\mathbf{w}\) cannot contain any letter of \(S_j\) for \(j \neq i\).
  Indeed, suppose for contradiction that \(\mathbf{w}\) contains a letter of \(S_j\) for some \(j \neq i\).
  Let \(a\) be the rightmost such letter in \(\mathbf{w}\).
  That is, \(\mathbf{w} = \mathbf{u} \,a \,\mathbf{v}\) for some \(a \in S_j\) with \(j \neq i\).
  Moreover, \(\mathbf{u}\) and \(\mathbf{v}\) are words such that \(\mathbf{v}\) only contains letters of \(S_i\).
  Then \(w^{-1} = v^{-1}a u^{-1}\).
  Since \(a\) commutes with all elements of \(S_i\), we have \[w s w^{-1} = u a v s v^{-1}a u^{-1} = u v s v^{-1}(aa)u^{-1} = u v s v^{-1}u^{-1}.\]
  However, since we assumed that \(\mathbf{w}\, s\, \overline{\mathbf{w}}\) is reduced, no such cancellation can occur.
  Therefore all the letters in \(\mathbf{w} \,s\, \overline{\mathbf{w}}\) belong to \(S_i\).
  This argument proves that \(\pal = \bigcup_i \pal_i\).

  Suppose that each language \(\pal_i\) is regular.
  Since any finite union of regular languages is regular, \(\pal\) is also regular.
  Thus one direction of the proposition is proved.

  On the other hand, the language \(\pal_i\) is simply the intersection of \(\pal\) with the regular language \(S_i^{\ast}\).
  Since the intersection of two regular languages is regular, we see that if \(\pal\) is regular, then so is \(\pal_i\) for each \(i\).
  Thus the other direction is also proved.
\end{proof}

We can now finish the proof of the main theorem.
\begin{proof}[Proof of~\cref{thm:all-coxeter-groups-main}]
  Suppose that \((W,S)\) is the Coxeter system associated to a Coxeter graph \(\Gamma\).
  Suppose that \(\Gamma\) has connected components \(\{\Gamma_i \mid 1 \leq i \leq r\}\) with associated Coxeter systems \(\{(W_i,S_i) \mid 1 \leq i \leq r\}\).
  Let \(\pal\) (resp.~\(\pal_i\)) be the language of palindromic reduced expressions for \(W\) (resp.~each \(W_i\)).
  
  If each \(W_i\) is either finite or of type \(\widetilde{A_1}\), then each \(\pal_i\) is regular (see~\cref{prop:affine-a1}).
  By~\cref{prop:disconnected-coxeter-system}, the language \(\pal\) is regular.
  Thus one direction of the theorem is proved.

  To prove the other direction, suppose that there is some \(i\) for which \(W_i\) is neither finite nor of type \(\widetilde{A_1}\). 
  By~\cref{prop:bad-subgraphs}, the Coxeter graph \(\Gamma_i\) contains one of the affine Coxeter graphs with at least three vertices, or one of the types \(Y_3\), \(Z_4\), or \(Z_5\).
  By~\cref{prop:affine-types,prop:y3,prop:z4,prop:z5}, the languages of palindromic reduced expressions of any of these subgraphs are not regular.
  By~\cref{lem:bootstrap-from-subgraph}, \(\pal_i\) is also not regular.
  Finally, by~\cref{prop:disconnected-coxeter-system}, \(\pal\) is not regular.
  The proof is complete.
\end{proof}

\bibliographystyle{siam}

\begin{thebibliography}{10}

\bibitem{bia.hoh.sas:26}
{\sc R.~Biagioli, C.~Hohlweg, and E.~Sasso}, {\em On generating functions and
  automata associated to reflections in coxeter systems},  (2026).
\newblock \url{https://arxiv.org/abs/2602.16361}.

\bibitem{bjo.bre:05}
{\sc A.~Bj{\"o}rner and F.~Brenti}, {\em Combinatorics of {Coxeter} groups},
  vol.~231 of Grad. Texts Math., New York, NY: Springer, 2005.

\bibitem{bre:24}
{\sc F.~Brenti}, {\em Some open problems on {Coxeter} groups and unimodality},
  (2024).
\newblock \url{https://arxiv.org/abs/2410.09897}.

\bibitem{bri.how:93}
{\sc B.~Brink and R.~B. Howlett}, {\em A finiteness property and an automatic
  structure for {Coxeter} groups}, Math. Ann., 296 (1993), pp.~179--190.

\bibitem{man:99}
{\sc R.~de~Man}, {\em The generating function for the number of roots of a
  {Coxeter} group}, J. Symb. Comput., 27 (1999), pp.~535--541.

\bibitem{dye.fis.hoh.ea:24}
{\sc M.~Dyer, S.~Fishel, C.~Hohlweg, and A.~Mark}, {\em Shi arrangements and
  low elements in {Coxeter} groups}, Proc. Lond. Math. Soc. (3), 129 (2024),
  p.~48.
\newblock Id/No e12624.

\bibitem{dye:87}
{\sc M.~J. Dyer}, {\em Hecke algebras and reflections in Coxeter groups}, PhD
  thesis, The University of Sydney, 1987.
\newblock \url{https://academicweb.nd.edu/~dyer/papers/dyerthesis.pdf}.

\bibitem{hum:90}
{\sc J.~E. Humphreys}, {\em Reflection groups and {C}oxeter groups}, vol.~29 of
  Cambridge Studies in Advanced Mathematics, Cambridge University Press,
  Cambridge, 1990.

\bibitem{mil:25}
{\sc E.~Mili{\'c}evi{\'c}}, {\em Reduced words for reflections in {Weyl}
  groups}, Result. Math., 80 (2025), p.~24.
\newblock Id/No 100.

\bibitem{sip:06}
{\sc M.~Sipser}, {\em Introduction to the theory of computation.}, Boston, MA:
  Thompson, 2nd. ed.~ed., 2006.

\bibitem{the:25}
{\sc {The Sage Developers}}, {\em {S}ageMath, the {S}age {M}athematics
  {S}oftware {S}ystem ({V}ersion 10.7)}, 2025.
\newblock \url{https://www.sagemath.org}.

\end{thebibliography}

\end{document}